\newtheorem{theorem}{Theorem}[section]
\newtheorem{lemma}[theorem]{Lemma}
\newtheorem{proposition}[theorem]{Proposition}
\newtheorem{conjecture}[theorem]{Conjecture}
\newtheorem{observation}[theorem]{Observation}
\theoremstyle{remark}
\newcommand{\scc}{\text{\upshape{scc}}}
\title{Circuit  covers of cubic signed graphs}
\author{Yezhou Wu\thanks{Ocean College, Zhejiang University, Hangzhou , Zhejiang 310027, P.R. China; Email: yezhouwu@zju.edu.cn. Partially supported by a grant of National Natural Science Foundation of China (No. 11501504) and a grant of Natural Science Foundation of Zhejiang Province (No. Y16A010005).}\; and
Dong Ye\thanks{Department of Mathematical Sciences and Center for Computational Sciences, Middle Tennessee State University, Murfreesboro, TN 37132. Email: dong.ye@mtsu.edu. Partially supported by Simons Foundation Grant \#396516.}}
\begin{document}

\maketitle

\begin{abstract}
A signed graph is a graph $G$ associated with a mapping
$\sigma: E(G)\to \{-1,+1\}$, denoted by $(G,\sigma)$. A {\em cycle} of $(G,\sigma)$ is a connected 2-regular subgraph. A cycle $C$ is {\em positive} if it has an even number of negative edges, and negative otherwise. A {\em circuit} of of a signed graph $(G,\sigma)$ is a positive cycle or a barbell consisting of two edge-disjoint negative cycles joined by a path. The definition of a circuit of signed graph comes from the signed-graphic matroid. A circuit cover of $(G,\sigma)$ is a family of circuits covering all edges of $(G,\sigma)$. A circuit cover with the smallest total length is called a shortest circuit cover of $(G,\sigma)$ and its length is denoted by $\scc(G,\sigma)$. Bouchet proved that
a signed graph with a circuit cover if and only if it is flow-admissible (i.e., has a nowhere-zero
integer flow). M\'a\v{c}ajov\'a et. al.
show that a 2-edge-connected signed graph $(G,\sigma)$ has $\scc(G,\sigma)\le 9 |E(G)|$ if it is flow-admissible.
This bound was improved recently by Cheng et. al.  to  $\scc(G,\sigma) \le 11|E(G)|/3$  for 2-edge-connected signed graphs with even negativeness, and particularly, $\scc(G,\sigma)\le 3|E(G)|+\epsilon(G,\sigma)/3$ for 2-edge-connected cubic signed graphs with even negativeness (where $\epsilon(G,\sigma)$ is the negativeness of $(G,\sigma)$). 
In this paper, we show that every 2-edge-connected cubic signed graph has $\scc(G,\sigma)\le 26|E(G)|/9$ if it is flow-admissible, and
$\scc(G,\sigma)\le 23|E(G)|/9$ if it has even negativeness.
\medskip

\noindent{\em Keywords:} Circuit Cover, Signed Graphs

\end{abstract}

\section{Introduction}
Let $G$ be a graph. A cycle of $G$ is a connected 2-regular subgraph. A graph $G$ is {\em 2-edge-connected} if $G$ is connected and does not contain a {\em cutedge},
 whose deletion disconnects the graph $G$.  A {\em singed graph} $(G,\sigma)$ is a graph associated with a mapping $\sigma: E(G)\to \{-1,+1\}$, which is called a {\em signature} of $(G,\sigma)$. An edge $e$ is positive if $\sigma(e)=1$ and negative if $\sigma(e)=-1$. A graph is a special signed graph with only positive edges. Signed graphs are well-studied combinatorial structures due to their applications in combinatorics, geometry and matroid theory (cf. \cite{TZ} ).

A cycle $C$ of a signed graph $(G,\sigma)$ is {\em positive} if it contains an even number of negative edges, and {\em negative} otherwise. A {\em barbell} of a signed graph is a pair of edge-disjoint negative cycles joined by a path, which could have length zero. A {\em circuit} of a signed graph is a positive cycle or a barbell. The definition of circuit of signed graphs comes from the signed-graphic matroid (cf. \cite{TZ}). For a graph $G$, a circuit of $G$ is a cycle.  A {\em circuit cover} $\mathscr C$ of a signed graph $(G,\sigma)$ is a family of circuits which covers all edges of $G$. The length of a circuit cover $\mathscr C$ is defined as $\ell(\mathscr C)=\sum_{C\in \mathscr C} |E(C)|$.  A {\em shortest circuit cover} $\mathscr C$ of $(G,\sigma)$ is a circuit cover with the smallest length, i.e. $\ell(\mathscr C)$ is minimum, over all circuit covers of $(G,\sigma)$. The length of a shortest circuit cover of $(G,\sigma)$ is denoted by $\scc(G,\sigma)$.

The shortest circuit cover problem has been well-studied for graphs (cf. \cite{Zhang}) and matroids (cf. \cite{Go, PDS}). Thomassen \cite{CT} showed that for a given graph $G$, it is NP-complete to determine $\scc(G)$, which settled a problem proposed by Itai et. al. \cite{ILPR}.  Bermond, Jackson and Jaeger~\cite{BJJ}, independently Alon and Tarsi \cite{AT} obtained the following result, which was further generalized by Fan~\cite{Fan90} to 2-edge-connected graph with positive weights on edges.

\begin{theorem}[Bermond, Jackson and Jaeger \cite{BJJ}, Alon and Tarsi \cite{AT}]\label{thm:5/3}
Let $G$ be a 2-edge-connected graph. Then $\scc(G)\le 5|E(G)|/3$.
\end{theorem}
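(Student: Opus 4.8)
The plan is to reduce the bound, via nowhere-zero flows, to the case of a graph admitting a nowhere-zero $4$-flow, where a cover of length $\tfrac43|E|$ is immediate. So first I would record the following: if a graph $H$ has a nowhere-zero $4$-flow, equivalently a nowhere-zero $\mathbb{Z}_2\times\mathbb{Z}_2$-flow $(x,y)$, then $F_1=\mathrm{supp}(x)$, $F_2=\mathrm{supp}(y)$, $F_3=\mathrm{supp}(x+y)$ are three even subgraphs of $H$ whose union is $E(H)$ and in which every edge lies in exactly two; hence $|F_1|+|F_2|+|F_3|=2|E(H)|$, any two of the $F_i$ already cover $E(H)$, and, an even subgraph being an edge-disjoint union of circuits, decomposing the two smallest into circuits gives $\scc(H)\le\tfrac23\cdot 2|E(H)|=\tfrac43|E(H)|$.

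Next I would apply Seymour's $6$-flow theorem to the $2$-edge-connected graph $G$, obtaining a nowhere-zero $\mathbb{Z}_2\times\mathbb{Z}_3$-flow $(\alpha,\beta)$. Put $C=\mathrm{supp}(\alpha)$ and $B=\mathrm{supp}(\beta)$. Since $\alpha$ is a $\mathbb{Z}_2$-flow, $C$ is an even subgraph; since the flow is nowhere zero, $C\cup B=E(G)$; and since $\beta$ is a $\mathbb{Z}_3$-flow, the terms with $\beta(e)=0$ drop out of the conservation law at each vertex, so $\beta$ restricted to $B$ is a nowhere-zero $\mathbb{Z}_3$-flow. Thus $B$ admits a nowhere-zero $3$-flow, and therefore a nowhere-zero $4$-flow; by the mechanism above applied to $B$ I obtain even subgraphs $D_1,D_2,D_3\subseteq B$ (even in $G$ as well) with every edge of $B$ in exactly two of them, so $D_i\cup D_j=B$ for all $i\ne j$. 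Then $C\cup D_i\cup D_j=E(G)$ for every pair, and decomposing $C$ together with the two smallest of $D_1,D_2,D_3$ into edge-disjoint circuits produces a circuit cover of $G$ of length at most $|C|+\tfrac23(|D_1|+|D_2|+|D_3|)=|C|+\tfrac43|B|$.

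It remains to bound $|C|+\tfrac43|B|$ against $m:=|E(G)|=|C|+|B|-|C\cap B|$, and here is the main obstacle. A short computation shows $|C|+\tfrac43|B|\le\tfrac53m$ if and only if $5\,|C\cap B|\le 2|C|+|B|$, which can fail only when the overlap $C\cap B$ — the edges carrying a flow value whose $\mathbb{Z}_2$-part is nonzero — forms an unusually large fraction of $E(G)$. In that regime $G$ is itself close to an even subgraph (in the extreme $C\cap B=E(G)$, $G$ is an even subgraph and $\scc(G)=|E(G)|$), and one must treat it by a separate argument: cover a large even subgraph of $G$, which costs only its own length, and handle the small remaining set of edges on its own, applying the $6$-flow machinery once more to a suitable piece containing them. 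Making this dichotomy quantitative — rather than using the crude choice $C=\mathrm{supp}(\alpha)$ throughout — is exactly where the constant $\tfrac53$ is forced, and is the technical heart of the proof. Finally, Fan's weighted strengthening follows by running the whole argument on the multigraph obtained from $G$ by replacing each edge $e$ with $w(e)$ parallel copies.
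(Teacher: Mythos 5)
First, note that the paper offers no proof of Theorem~\ref{thm:5/3}: it is quoted from Bermond--Jackson--Jaeger and Alon--Tarsi, so your attempt has to be judged on its own merits rather than against an in-paper argument. Your first two paragraphs are correct and are indeed the standard opening moves: the $\tfrac43|E(H)|$ bound for graphs with a nowhere-zero $4$-flow via the three even subgraphs $F_1,F_2,F_3$, and the reduction via Seymour's $6$-flow theorem to a cover of length $|C|+\tfrac43|B|$ with $C=\mathrm{supp}(\alpha)$ even and $B=\mathrm{supp}(\beta)$ carrying a nowhere-zero $\mathbb{Z}_3$-flow. Your computation that this beats $\tfrac53|E(G)|$ exactly when $5|C\cap B|\le 2|C|+|B|$ is also right.

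The gap is the remaining case, which you describe but do not prove, and the sketch you give for it does not work as stated. Two concrete problems. (i) Failure of $5|C\cap B|\le 2|C|+|B|$ does not force $G$ to be close to an even subgraph: with $m=|E(G)|$, the values $|C|=0.8m$, $|B|=m$, $|C\cap B|=0.8m$ violate the inequality, yet $0.2m$ edges lie outside $C$ and your bound there is still about $2.13m$; in the absolute worst case ($C=B=E(G)$) it is $\tfrac73 m$. (ii) The proposed remedy --- cover a large even subgraph at the cost of its own length and ``apply the $6$-flow machinery once more'' to a piece containing the leftover edges --- is circular: the leftover edges $E\setminus C$ are not bridgeless on their own, so the circuits covering them must re-enter $C$, and the natural construction (extend $E\setminus C$ to the support of a $\mathbb{Z}_3$-flow and take its $\tfrac43$-cover) returns exactly the bound $|C|+\tfrac43|B|$ you started from. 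The published proofs need a genuinely additional idea at precisely this point: instead of using $C$ and a cover of $B$ independently, they make the final three even subgraphs share the cost of the overlap $C\cap B$, for instance by comparing several candidate covers built from symmetric differences such as $C\triangle D_i$ together with $D_j,D_k$, and/or by optimizing the choice of the $\mathbb{Z}_2\times\mathbb{Z}_3$-flow itself, so that each edge of $C\cap B$ is paid for only a fraction of the time. Without such a step your argument establishes only $\scc(G)\le \tfrac73|E(G)|$. (Your closing remark on Fan's weighted version is fine in spirit, but it requires the theorem for multigraphs and a scaling/limiting argument for non-integer weights.)
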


The bound in the above theorem was further improved to $44|E(G)|/27$ by Fan \cite{Fan94} for 2-edge-connected cubic graphs. For cubic graphs $G$ with a nowhere-zero 5-flow,  Jamshy, Raspaud and Tarsi~\cite{JRT} show that $\scc(G)\le 8|E(G)|/5$. With additional information on cycle or 2-factor structures, some upper bounds on shortest circuit cover of cubic graphs are obtained in \cite{CL, HZ, KKLNS}. In general, Alon and Tarsi made the following conjecture -- the Shortest Circuit Cover Conjecture.

\begin{conjecture}[Alon and Tarsi \cite{AT}]\label{conj:SCDC}
Every 2-edge-connected cubic graph has a shortest circuit cover with length at most $7 |E(G)|/5$.
\end{conjecture}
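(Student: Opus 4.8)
\medskip
\noindent Conjecture~\ref{conj:SCDC} has been open for decades, so the following is only a sketch of the natural line of attack; the plan is to dispose of the $3$-edge-colorable case at once, reduce the general case to snarks, and then control a ``surplus'' term in a $2$-factor-based cover, with the whole obstruction lying in that last control. Being $2$-edge-connected, $G$ is bridgeless and hence has a perfect matching by Petersen's theorem. If $G$ is $3$-edge-colorable with color classes $M_1,M_2,M_3$, then $M_1\cup M_2$ and $M_1\cup M_3$ are $2$-factors, each a disjoint union of even cycles; the cycles in these two $2$-factors together cover every edge, with total length $|M_1|+|M_2|+|M_1|+|M_3|=|E(G)|+|E(G)|/3=4|E(G)|/3<7|E(G)|/5$, so the conjecture holds with room to spare. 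Thus we may assume $G$ is not $3$-edge-colorable, and a standard reduction along non-trivial $2$- and $3$-edge-cuts and short cycles (contract one side to a smaller bridgeless cubic graph, cover it by induction, and lift the cover) lets us assume in addition that $G$ is cyclically $4$-edge-connected of girth at least $5$, i.e.\ a snark; the Petersen graph is the essential base case.

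For a snark I would work in the $2$-factor framework underlying Theorem~\ref{thm:5/3} and its cubic refinements. Fix a $2$-factor $F$ and let $M=E(G)\setminus F$ be the complementary perfect matching. Each cycle of $F$ is already a circuit, so $F$ supplies a cover of its $2|E(G)|/3$ edges for free, and it remains to cover $M$ by circuits assembled from the edges of $M$ together with arcs of the cycles of $F$. The even cycles of $F$ can be treated with almost no waste, while each odd cycle of $F$ forces additional circuits through further arcs of $F$ and further matching edges; these odd cycles are the sole source of difficulty, playing the role that negative cycles and barbells play in the signed setting. So everything reduces to bounding the total surplus (re-covered arcs plus the overhead around odd cycles), over a best choice of $F$, by $(7/5-1)|E(G)|=2|E(G)|/5$. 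That this is sharp is witnessed by the Petersen graph: every $2$-factor there is a pair of odd $5$-cycles and $\scc=21=7|E(G)|/5$, so any successful argument must meet the bound with equality on the Petersen graph.

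The obstacle is precisely this surplus bound for genuine snarks. The strongest unconditional estimate of this kind is Fan's $\scc(G)\le 44|E(G)|/27$ for cubic graphs \cite{Fan94}, proved by an intricate induction with an edge-weighting engineered to survive each reduction; its implied surplus $17|E(G)|/27$ is still far larger than the target $2|E(G)|/5$. Adding a flow hypothesis helps but does not suffice: one gets $8|E(G)|/5$ under a nowhere-zero $5$-flow \cite{JRT}, and $4|E(G)|/3$ under a nowhere-zero $4$-flow. Closing the gap would require a substantially stronger structural handle on how short cycles are forced to sit inside the $2$-factors (equivalently, the small flows) of a cyclically $4$-edge-connected cubic graph, and at present no such handle is known; this is why the conjecture remains open. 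A realistic interim goal, parallel to the conditional results obtained for signed graphs in this paper, is to prove $7|E(G)|/5$ under extra hypotheses on $G$ — a prescribed $2$-factor type, or a nowhere-zero $5$-flow whose support is under control — thereby narrowing the interval between $44/27$ and $7/5$.
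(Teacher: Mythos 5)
The statement you were asked about is Conjecture~\ref{conj:SCDC}, which is an open problem: the paper states it only as background (attributed to Alon and Tarsi \cite{AT}) and offers no proof of it, so there is no argument of the authors' to compare yours against. Your submission is candid that it is a programme rather than a proof, and as a survey of the situation it is essentially accurate: the $3$-edge-colorable case does give $\scc(G)\le 4|E(G)|/3$ via the two $2$-factors $M_1\cup M_2$ and $M_1\cup M_3$; the Petersen graph does attain $21=7\cdot 15/5$ and shows the bound would be tight; and the best unconditional cubic bound really is Fan's $44|E(G)|/27$ \cite{Fan94}, with $8|E(G)|/5$ under a nowhere-zero $5$-flow \cite{JRT}. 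The genuine gap is exactly the one you name: for snarks there is no known way to bound the surplus of a $2$-factor-based cover by $2|E(G)|/5$, and nothing in your sketch supplies that bound, so the conjecture is not proved.

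One further soft spot worth flagging: your ``standard reduction along non-trivial $2$- and $3$-edge-cuts and short cycles'' to the cyclically $4$-edge-connected, girth-$\ge 5$ case is routine for the Circuit Double Cover Conjecture (Conjecture~\ref{conj:CDC}), but it is not automatic for a shortest-cover statement with a fixed ratio, because lifting a cover back across a small cut can lengthen it disproportionately; this is precisely why Fan's induction needs a carefully engineered edge-weighting. So even the reduction to snarks would need justification before the surplus bound becomes the only remaining obstacle. Your suggested interim goal of proving $7|E(G)|/5$ under additional structural or flow hypotheses is reasonable and is in the spirit of the conditional bounds this paper proves for signed cubic graphs, but as submitted the proposal establishes nothing beyond what is already in the cited literature.
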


Jamshy and Tarsi~\cite{JT} proved that Conjecture~\ref{conj:SCDC} implies the well-known Circuit Double Cover Conjecture, proposed independently by Szekeres~\cite{GS} and Seymour~\cite{PDS}.

\begin{conjecture}[Szekeres \cite{GS} and Seymour \cite{PDS}]\label{conj:CDC}
Every 2-edge-connected graph has a family of circuits which covers every edge twice.
\end{conjecture}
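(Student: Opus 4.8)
This is the Circuit Double Cover Conjecture, and any honest plan must begin by acknowledging that it is a central open problem; what follows is the line of attack I would pursue rather than a route I expect to complete. The first step is the standard reduction to cubic graphs and then to snarks. If $G$ has a vertex of degree at least four, one splits it into vertices of smaller degree while preserving $2$-edge-connectivity, and a circuit double cover of the split graph projects back to one of $G$, while degree-two vertices are suppressed; hence it suffices to treat $2$-edge-connected cubic graphs. Among these, every $3$-edge-colorable cubic graph already has an explicit circuit double cover: coloring the edges with colors $1,2,3$, the three even subgraphs $C_{12},C_{13},C_{23}$ obtained by taking the union of each pair of color classes are $2$-regular, and each edge lies in exactly two of them, so decomposing each $C_{ij}$ into its cycles yields the desired family. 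Passing to a minimum counterexample and removing small edge cuts and short cycles in the usual way, I would reduce to the case where $G$ is a snark: cubic, cyclically $4$-edge-connected, of girth at least $5$, and not $3$-edge-colorable.

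The second step is to convert the problem into the language of even subgraphs and flows, where the $3$-edge-colorable case above is exactly the statement that $G$ admits three even subgraphs covering each edge twice, equivalently a nowhere-zero $4$-flow. Since snarks have no nowhere-zero $4$-flow, three even subgraphs never suffice, and the plan is to build a double cover from more even subgraphs using the slack afforded by the nowhere-zero $6$-flow that Seymour proved every bridgeless graph admits. Concretely, I would fix a $2$-factor $F$ of $G$ with few odd components and try to pair up the odd cycles of $F$ so that, together with a second and third even subgraph read off from a flow decomposition, each edge is covered exactly twice; the $5$-flow refinement (Tutte's conjecture) would tighten this further and is closely tied to the shortest-cover bounds discussed above.

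A complementary route is the reduction of the Circuit Double Cover Conjecture to the Shortest Circuit Cover Conjecture (Conjecture~\ref{conj:SCDC}): Jamshy and Tarsi \cite{JT} showed that a cover of total length at most $7|E(G)|/5$ can be upgraded to a double cover, so it would suffice to improve the bound of Theorem~\ref{thm:5/3} from $5|E(G)|/3$ down to $7|E(G)|/5$ on cubic graphs. This reframes the target as an extremal estimate, where joins, $T$-joins, and flow-based covers are the natural tools, and where one would try to save length on every $2$-factor by choosing circuits that reuse the matching edges efficiently.

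The main obstacle, and the reason the conjecture remains open, is precisely the snark case with large oddness. All of the reductions above terminate at cyclically highly connected non-$3$-edge-colorable cubic graphs for which no flow, no induced coloring, and no known structural decomposition produces the required pairing of odd cycles, and for which the extremal bound of $7|E(G)|/5$ is not known to be attainable. I therefore expect that the genuine content of any proof lies not in the reductions, which are routine, but in a new invariant or construction controlling snarks of unbounded oddness; absent such an idea, the plan stops short of a complete argument.
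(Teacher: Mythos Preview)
The statement is a \emph{conjecture}, and the paper does not prove it; it is stated purely as background and motivation, with the paper noting (as you do) the Jamshy--Tarsi result that Conjecture~\ref{conj:SCDC} implies it. Your proposal correctly identifies the Circuit Double Cover Conjecture as a central open problem and honestly frames what follows as a plan of attack rather than a proof, so there is no discrepancy to flag: neither you nor the paper claims to settle it.
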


By the splitting lemma of Fleischner (Lemma III.26 in \cite{HF}), it suffices to show that Conjecture~\ref{conj:CDC} holds for all 2-edge-connected cubic graphs. For 2-edge-connected cubic graphs, Conjecture~\ref{conj:CDC} is equivalent to another long-standing problem, the Strong Embedding Conjecture due to Haggard~\cite{Ha}, which says that every 2-connected graph has an embedding in a closed surface such that every face is an open disc and is bounded by a cycle, so-called a strong embedding. Based on the coloring-flow duality, the dual of a digraph embedded in an orientable surface is a  graph (or balanced signed graph) but the dual of a digraph embedded in a non-orientable surface is a signed graph (cf. \cite{DGMVZ}). By the duality, the dual of a digraph strongly embedded in a non-orientable surface is a signed graph with an even number of negative edges. It is interesting to ask: for a given 2-connected signed graph $(G,\sigma)$ with an even number of negative edges, is $(G,\sigma)$ a dual of some digraph strongly embedded in a non-orientable surface? If so, then $(G,\sigma)$ has a circuit double cover because every face boundary of $(G,\sigma)$ is a positive cycle. A weaker question is whether a 2-connected signed graph with an even number of negative edges has a circuit double cover or not? The answer to this question is negative, even for 3-connected cubic signed graph. The signed graph in Figure~\ref{fig:noCDC} has no circuit double cover.

\begin{figure}[!hbtp] \refstepcounter{figure}\label{fig:noCDC}
\begin{center}
\includegraphics[scale=1]{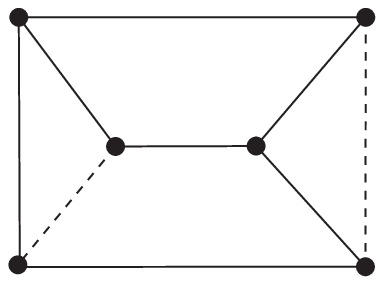}\\
{Figure~\ref{fig:noCDC}: A 3-connected signed graph without a circuit double cover \\
(solid edges are positive and dashed edges are negative).}
\end{center}
\end{figure}

It is natural to consider the shortest circuit cover problem for signed graphs. Let $(G,\sigma)$ be a signed graph with a circuit cover. Does $(G,\sigma)$ have a shortest circuit cover with length less than $2|E(G)|$, which follows directly if $(G,\sigma)$ has a circuit double cover? However, the above examples show that some signed graphs do not have a circuit double cover. The shortest circuit cover problem for signed graph have been studies by M\'a\v{c}ajov\'a et. al. \cite{mrrs} and Cheng et. al. \cite{CLLZ}. Before presenting their results, we need some terminologies.

The 2-edge-connectivity condition is sufficient for a graph to have a circuit cover, but it does not guarantee the existence of a circuit cover for a signed graph. A signed graph $(G,\sigma)$ is {\em flow-admissible} if $(G,\sigma)$ has a nowhere-zero flow. If $(G,\sigma)$ has a circuit cover, then every edge of $(G,\sigma)$ is contained by a circuit (a positive cycle or a barbell). Note that a positive cycle has a nowhere-zero 2-flow, but a barbell has a nowhere-zero 3-flow (cf. \cite{Bo}). So a signed graph with a circuit cover is flow-admissible. Bouchet \cite{Bo} proved that a signed graph $(G,\sigma)$ has a circuit cover property if and only if it is flow-admissible.  M\'a\v{c}ajov\'a et. al. \cite{mrrs} obtained the following result.

\begin{theorem}[M\'a\v{c}ajov\'a, Raspaud, Rollov\'a and \v{S}koviera, \cite{mrrs}]
Let $(G,\sigma)$ be a 2-edge-connected signed graph. If $(G,\sigma)$ is flow-admissible, then $\scc(G,\sigma)\le 9|E(G)|$.
\end{theorem}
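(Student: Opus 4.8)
\medskip
\noindent\textbf{Proof proposal.}
The plan is to split on whether every cycle of $(G,\sigma)$ is positive, calling $(G,\sigma)$ \emph{balanced} in that case. If $(G,\sigma)$ is balanced, then a sequence of \emph{switchings} (negating all edges at a vertex --- an operation that changes neither the collection of circuits of $(G,\sigma)$, nor their lengths, nor flow-admissibility) makes every edge positive, so a shortest circuit cover of $(G,\sigma)$ coincides with a shortest cycle cover of the $2$-edge-connected graph $G$, and Theorem~\ref{thm:5/3} gives $\scc(G,\sigma)\le 5|E(G)|/3<9|E(G)|$. Hence all the work is in the unbalanced case, treated next.

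Assume $(G,\sigma)$ is unbalanced and flow-admissible; first switch to a signature with as few negative edges as possible, then cover $E(G)$ in two stages. \textbf{Stage 1 (negative edges).} By Bouchet's theorem $(G,\sigma)$ has a circuit cover, so every negative edge already lies on some circuit; using the $2$-edge-connectivity of $G$, the goal is to select a family $\mathcal B$ of circuits (mostly barbells, possibly a few positive cycles that happen to carry negative edges) covering \emph{all} negative edges whose total length is at most a constant times $|E(G)|$ --- one natural construction is to take a nowhere-zero integer flow of $(G,\sigma)$, decompose it without cancellation into elementary signed-circuit flows (a positive cycle carrying value $1$, a barbell carrying value $1$ on each of its two negative cycles and $2$ on its connecting path), and keep exactly the pieces that meet a negative edge. \textbf{Stage 2 (the rest).} Every edge of $G$ not covered by $\mathcal B$ is positive, so any cycle of $G$ through such edges is automatically a positive cycle; it therefore suffices to cover the leftover positive subgraph by cycles of $G$. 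Putting weight $1$ on the leftover edges and weight $0$ on all others and invoking Fan's weighted strengthening of Theorem~\ref{thm:5/3} produces a cycle cover of weight at most $5|E(G)|/3$. Adding the contributions of the two stages should give $\scc(G,\sigma)\le 9|E(G)|$.

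The main obstacle is the interaction between the two stages. Barbells are wasteful --- covering a single negative edge can cost two entire cycles plus a path --- so $\mathcal B$ must be chosen very frugally, and essentially all of the gap between $5/3$ and $9$ is spent here; more seriously, deleting the edges of $\mathcal B$ can destroy the $2$-edge-connectivity of the graph of leftover positive edges, so Theorem~\ref{thm:5/3}, and even its weighted form, cannot be applied to that subgraph directly. One is then forced either to run the weighted theorem on all of $G$ and repair each resulting cycle into a positive cycle (rerouting a negative cycle through $\mathcal B$, or merging two negative cycles into a barbell), or to add back a controlled number of already-covered edges to restore connectivity; bounding the cost of this repair against $|E(G)|$, simultaneously with controlling $\ell(\mathcal B)$, is the technical heart of the argument. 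A cleaner but weaker variant avoids the case split entirely: pick a nowhere-zero integer flow $f$ of $(G,\sigma)$ minimizing $\sum_e|f(e)|$, decompose $f$ without cancellation into elementary signed-circuit flows whose supports then cover $E(G)$, and bound the total length of this cover by $\sum_e|f(e)|$ (a positive-cycle piece contributes its length, a barbell piece at most twice its length but never more than its own contribution to $\sum_e|f(e)|$); this reduces the whole problem to bounding the minimum total flow value of a nowhere-zero flow on a $2$-edge-connected flow-admissible signed graph.
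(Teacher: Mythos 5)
This statement is quoted from M\'a\v{c}ajov\'a, Raspaud, Rollov\'a and \v{S}koviera \cite{mrrs}; the present paper gives no proof of it, so your attempt can only be judged on its own terms, and as it stands it is a plan rather than a proof. The balanced case is fine (switching to the all-positive signature and applying Theorem~\ref{thm:5/3}), but in the unbalanced case neither stage is actually carried out. In Stage~1 you never establish any bound on $\ell(\mathcal B)$: the flow-decomposition construction you sketch requires (i) a theorem that every nowhere-zero integer flow on a signed graph decomposes without cancellation into elementary signed-circuit flows of the stated shape, which is a nontrivial fact you would have to prove or cite, and (ii) an upper bound on $\sum_e|f(e)|$ for some nowhere-zero flow, which is essentially the whole difficulty --- indeed your ``cleaner but weaker variant'' openly reduces the problem to bounding this quantity and then stops, so it is not a variant of a proof but a restatement of the problem. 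Note also that flow-admissible signed graphs are not known to admit nowhere-zero flows with small values (Bouchet's 6-flow conjecture is open), so this route does not obviously yield any explicit constant, let alone $9$.

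In Stage~2 the key assertion ``any cycle of $G$ through such edges is automatically a positive cycle'' is false as written: a cycle of $G$ containing leftover positive edges may also traverse negative edges and be a negative cycle, hence not a circuit of $(G,\sigma)$; and if you instead restrict to the all-positive leftover subgraph, it need not be $2$-edge-connected (it can even have bridges), so neither Theorem~\ref{thm:5/3} nor Fan's weighted version applies to it. You identify both of these obstacles yourself and correctly call their resolution ``the technical heart of the argument,'' but you do not resolve them. A complete proof must either control how the circuits covering the negative edges interact with the positive subgraph (for instance, the present paper's Lemma~\ref{lem:cutedge} shows that a family of circuits putting every negative edge on a cycle automatically covers all cutedges of $G^+$, after which Theorem~\ref{thm:5/3} is applied to the $2$-edge-connected blocks of $G^+$ minus its cutedges), or produce an explicit bounded-length family of circuits through the negative edges; without one of these, no inequality of the form $\scc(G,\sigma)\le c|E(G)|$ follows.
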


The above result was improved recently by Cheng et. al. \cite{CLLZ} for 2-edge-connected signed graph $(G,\sigma)$ with even negativeness 
 (see Section 2 for definition of negtiveness) as follows.

\begin{theorem}[Cheng, Lu, Luo and Zhang \cite{CLLZ}]
Let $(G,\sigma)$ be a 2-edge-connected signed graph with even negativeness. Then $\scc(G,\sigma)\le 11|E(G)|/3$.
\end{theorem}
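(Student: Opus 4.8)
The plan is to reduce to the ordinary (unsigned) graph setting, where Theorem~\ref{thm:5/3} and its weighted refinement (Fan~\cite{Fan90}) apply, by first peeling off a short family of signed circuits (positive cycles and barbells) that together contain all the negative edges, and then covering the remaining, now balanced, part.

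First I would normalise. Switching $\sigma$ if necessary, assume the negative edge set $E^-$ has size $\epsilon:=\epsilon(G,\sigma)$, which is even by hypothesis. If $\epsilon=0$ the signed graph is balanced, so $\scc(G,\sigma)=\scc(G)\le\tfrac{5}{3}|E(G)|<\tfrac{11}{3}|E(G)|$ by Theorem~\ref{thm:5/3}; hence assume $\epsilon\ge2$. One checks that a $2$-edge-connected signed graph with even negativeness is flow-admissible (every edge lies in a positive cycle or a barbell, using that $G$ is bridgeless and $\epsilon\neq1$), so by Bouchet's theorem~\cite{Bo} a circuit cover exists; it remains to exhibit a short one.

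Next I would absorb the negative edges into a subgraph $H$ with $E^-\subseteq E(H)$ that is covered by a family $\mathcal{B}$ of barbells (supplemented, where convenient, by positive cycles) of total length $\ell(\mathcal{B})$ small relative to $|E(H)|$ --- one wants $\ell(\mathcal{B})\le 2|E(H)|$ or so. The natural construction is to link the negative cycles of $(G,\sigma)$ into a connected ``tree of cycles'' by short paths and pair consecutive negative cycles into barbells, so that every edge of $H$ is used only a bounded number of times. The remaining edges $F:=E(G)\setminus E(H)$ are all positive, and I would cover them by positive cycles of $(G,\sigma)$ (which may re-use edges of $H$) via a Bermond--Jackson--Jaeger type bound (cf.~Theorem~\ref{thm:5/3}) applied inside the $2$-edge-connected $(G,\sigma)$, since $F$ by itself need not span a $2$-edge-connected graph.

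Adding the two parts gives $\scc(G,\sigma)\le\ell(\mathcal{B})+(\text{cost of covering }F)$. The tension, and the source of the constant $\tfrac{11}{3}$ rather than $\tfrac{5}{3}$, is that forcing $E^-\subseteq E(H)$ makes $E(H)$ large exactly when $\epsilon$ is large, which simultaneously inflates $\ell(\mathcal{B})$ and degrades the connectivity of $F$, so that covering $F$ needs cycles that wander far outside $F$, costing well above $\tfrac{5}{3}|F|$. The main obstacle is to control these two costs \emph{together}: choosing $H$ and the barbells $\mathcal{B}$ covering it so that neither the barbell term nor the $F$-covering term blows up, and doing so through a global construction --- built on a spanning tree or a fixed ear/cycle decomposition of $G$ --- rather than a greedy one. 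Optimising this trade-off is precisely where the bound $\tfrac{11}{3}|E(G)|$ is earned.
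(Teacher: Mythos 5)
This statement is quoted from Cheng, Lu, Luo and Zhang \cite{CLLZ}; the paper you were given does not prove it, so there is no internal proof to match against. Judged on its own terms, your proposal is a plan rather than a proof: every quantitative step that would actually produce the constant $\tfrac{11}{3}$ is deferred. You never construct the subgraph $H$ and the barbell family $\mathcal{B}$, never establish a bound of the form $\ell(\mathcal{B})\le c\,|E(H)|$, never bound the cost of covering $F=E(G)\setminus E(H)$, and you explicitly acknowledge that ``optimising this trade-off is precisely where the bound is earned.'' That optimisation \emph{is} the theorem; without it nothing has been proved. For comparison, the paper's analogous argument in the cubic case (Theorem~\ref{thm:even}) makes all of this concrete: it builds a signed cycle-tree $(H,\sigma)$ containing $E^-$ from a spanning tree of $G^+$ via symmetric differences of elementary cycles (a step that uses cubicity to guarantee the symmetric difference is a disjoint union of cycles, and which does not transfer verbatim to general $2$-edge-connected graphs), covers it at cost $\tfrac43|E(H)|$ by Theorem~\ref{thm:tree-cover} (whose proof requires the careful parity bookkeeping of Lemma~\ref{lem:tree-circuit-cover} --- exactly the ``pair consecutive negative cycles into barbells'' step you wave at), and covers the rest inside $G^+$.

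There is also a concrete soundness problem in the one step you do specify. You propose to cover $F$ ``by positive cycles of $(G,\sigma)$ \ldots via a Bermond--Jackson--Jaeger type bound applied inside the $2$-edge-connected $(G,\sigma)$.'' Theorem~\ref{thm:5/3} is a statement about unsigned graphs: applied to $G$ it returns cycles of $G$ with no control over their signs, and a negative cycle is not a circuit of $(G,\sigma)$, so the resulting family need not be a circuit cover. The standard fix (and the one the paper uses) is to apply Theorem~\ref{thm:5/3} to the all-positive subgraph $G^+=G\setminus E^-$, where every cycle is automatically positive; but then $G^+$ need not be $2$-edge-connected, and its cutedges must be handled separately (this is the role of Lemma~\ref{lem:cutedge} and of inequality~(\ref{eq:cut}), which require the normalisation $|E^-|=\epsilon(G,\sigma)$). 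Your sketch neither makes this restriction nor accounts for the cutedges, so even the ``easy half'' of the argument has a gap.
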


For 2-connected cubic signed graphs $(G,\sigma)$ with even negativeness, the above bound could be improved to  $\scc(G,\sigma)\le 3|E(G)|+\epsilon(G,\sigma)/3$  in terms of negativeness $\epsilon(G,\sigma)$ of $(G,\sigma)$ (see \cite{CLLZ}). In this paper, we consider the shortest circuit cover of cubic signed graphs and the following is our main result.

\begin{theorem}\label{thm:main}
Let $(G,\sigma)$ be a 2-edge-connected cubic signed graph. If $(G,\sigma)$ is flow-admissible, then $\scc(G,\sigma)\le 26|E(G)|/9$.
\end{theorem}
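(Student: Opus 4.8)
The plan is to reduce the problem to the even-negativeness case already handled (in the companion work of Cheng et al., which gives $\scc(G,\sigma)\le 3|E(G)|+\epsilon(G,\sigma)/3$ for $2$-edge-connected cubic signed graphs with even negativeness) by a local modification argument, and then balance the bound obtained there against a bound coming from covering the ``hard'' part of the graph with barbells. First I would recall that a flow-admissible cubic signed graph, after switching, can be assumed to have its negative edges forming a structure that is controllable: by switching at vertices we may assume the number of negative edges equals $\epsilon(G,\sigma)$, the negativeness, and that they form a ``minimal'' negative-edge set. If $\epsilon(G,\sigma)$ is already small relative to $|E(G)|$, the Cheng et al.\ bound $3|E(G)|+\epsilon(G,\sigma)/3$ is close to $3|E(G)|$, which is worse than $26|E(G)|/9$; so the real content is a trade-off that lets us do better when $\epsilon$ is small, using the extra room to find a genuinely short cover.

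The key steps, in order, would be: (1) normalize $(G,\sigma)$ by switching so that the negative edges are few and ``spread out,'' and handle the degenerate cases (small graphs, graphs with digons or cut-structure) by the usual reductions so that one may assume $G$ is essentially $3$-edge-connected cubic; (2) when $\epsilon(G,\sigma)$ is even, invoke the Cheng et al.\ bound directly, obtaining $\scc\le 3|E(G)|+\epsilon/3$, and separately prove a second bound of the form $\scc \le c_1|E(G)| - c_2\epsilon$ by exhibiting an explicit short cover: take a short circuit cover of the underlying graph (via Theorem~\ref{thm:5/3} or the cubic refinement $44|E(G)|/27$), and then ``repair'' it at the negative edges by replacing the offending positive cycles with barbells or with longer positive cycles, paying a bounded cost per negative edge; (3) take the minimum of the two bounds — the worst case is when they are equal, $3|E(G)|+\epsilon/3 = c_1|E(G)|-c_2\epsilon$, which pins down the extremal value of $\epsilon$ and yields the constant $26/9$ after optimizing the constants $c_1,c_2$; (4) finally, when $\epsilon(G,\sigma)$ is odd, add a single negative edge in parallel (or subdivide and re-sign) to reduce to the even case, accounting for the negligible additive cost and absorbing it into the bound since $G$ is cubic and the modification is local.

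The main obstacle will be step (2): constructing the second bound $\scc \le c_1|E(G)|-c_2\epsilon$ with constants good enough that the trade-off in step (3) lands exactly at $26/9$. The difficulty is that repairing a circuit cover of the underlying graph at a negative edge is not free — a positive cycle through a negative edge is not a circuit of the signed graph, and replacing it either requires pairing up negative edges into barbells (which forces control over how the negative edges are distributed among the cycles of the cover, hence the normalization in step (1) is doing real work) or requires rerouting through an auxiliary structure. One has to be careful that the same edge is not over-counted across several repairs, and that the barbell-connecting paths can be chosen short and (nearly) edge-disjoint; this is where a careful case analysis on the structure of a short cover of the underlying cubic graph, together with a counting argument tracking how many times each edge lies in a ``bad'' cycle, will be needed. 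The odd-negativeness reduction in step (4) is routine by comparison, and the switching normalization in step (1) is standard, so the technical heart of the paper is the explicit repaired cover and its length accounting.
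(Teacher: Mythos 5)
Your plan has a fatal arithmetic problem at its core. The balancing step (3) asks us to take the minimum of the Cheng et al.\ bound $3|E(G)|+\epsilon(G,\sigma)/3$ and a hypothetical second bound $c_1|E(G)|-c_2\,\epsilon(G,\sigma)$; but the first quantity is at least $3|E(G)|$ for every value of $\epsilon$, hence always strictly larger than the target $26|E(G)|/9$. A minimum with a bound that never drops below the target can reach the target only if the second bound does so on its own for every admissible $\epsilon$, i.e.\ only if $c_1\le 26/9$ --- at which point the trade-off is vacuous and you have simply asserted the theorem. The paper avoids this by not using the Cheng et al.\ bound at all: it proves its own, much stronger, even-negativeness bound $\scc(G,\sigma)<23|E(G)|/9$ (Theorem~\ref{thm:even}) via a new structural tool, namely that every signed cycle-tree with an even number of negative cycles admits a circuit family covering all its cycles with total length at most $\tfrac43$ of its edge number (Theorem~\ref{thm:tree-cover}). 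Nothing in your proposal supplies an ingredient of comparable strength, and your step (2) ``repair'' construction --- the place where such an ingredient would have to live --- is explicitly left open.

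The odd-negativeness reduction in step (4) also does not work as stated. The parity of $\epsilon(G,\sigma)$ is an invariant of the switching class, so it cannot be changed by re-signing; and adding a parallel negative edge or subdividing destroys cubicity, changes $|E(G)|$, and produces circuits that need not project to circuits of the original signed graph. Far from being ``routine by comparison,'' the odd case is where most of the paper's work lies: one sets aside a single negative edge $e$ lying in a cycle of some circuit, covers the remaining $\epsilon-1$ negative edges by the cycle-tree theorem, and must then pay for a circuit through $e$. This forces a dichotomy on the signed girth: if some circuit with negative edges has length at most $|E(G)|/3+1$ one adds it directly, and otherwise a counting argument (Lemma~\ref{lem:bigcircuit}, using three circuits $C_{12},C_{13},C_{23}$ inside $T\cup\{e_1,e_2,e_3\}$ that cover each edge at most twice) derives a contradiction. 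Your proposal contains no mechanism for bounding the length of the extra circuit through the unpaired negative edge, which is precisely the difficulty the signed-girth dichotomy is designed to overcome.
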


For 2-connected cubic signed graphs $(G,\sigma)$ with even negativeness, the bound in Theorem~\ref{thm:main}  can be improved to $\scc(G,\sigma)\le 23|E(G)|/9$ as shown in Theorem~\ref{thm:even}.

\section{Preliminaries }

Let $(G,\sigma)$ be a connected signed graph. If $H$ is a subgraph of $G$,  the {\em signed subgraph} of $(G,\sigma)$ consisting of edges in $H$ together with their signatures is denoted by $(H,\sigma)$. An edge-cut $S$ of $(G,\sigma)$ is a minimal set of edges whose removal disconnects the signed graph.  A {\em switch operation $\zeta$}
on $S$ is a mapping $\zeta: E(G)\to \{-1,1\}$ such that $\zeta (e)=-1$ if $e\in S$ and $\zeta(e)=1$ otherwise. Two signatures $\sigma$ and $\sigma'$ are {\em equivalent} if there exists an edge cut $S$ such that $\sigma (e)=\zeta (e) \cdot \sigma'(e)$ where $\zeta$ is the switch operation on $S$. For any edge-cut $S$, a cycle $D$ of $(G,\sigma)$ contains an even number of edges from $S$. So a circuit $C$ of $(G,\sigma)$ is also a circuit of $(G,\sigma')$ for any equivalent signature $\sigma'$ of $\sigma$. Therefore, we immediately have the following observation.

\begin{observation}\label{lem:equ}
Let $(G,\sigma)$ be a flow-admissible signed graph and $\sigma'$ be an equivalent signature of $\sigma$.
Then $\scc(G,\sigma)=\scc(G,\sigma').$
\end{observation}

The {\em negativeness} of a signed graph $(G,\sigma)$ is the smallest number
of negative edges over all equivalent signatures of $\sigma$, denoted by $\epsilon(G,\sigma)$. M\'a\v{c}ajov\'a and \v{S}koviera~\cite{MS} proved that a 2-edge-connected signed graph is flow-admissible if and only if $\epsilon(G,\sigma)\ne 1$. Combining it with Boucet's result \cite{Bo} that a signed graph with a circuit cover if and only if it is flow-admissible, the following observation holds.

\begin{observation}\label{ob:1-negative-edge}
Let $(G,\sigma)$ be a 2-edge-connected signed graph. Then $(G,\sigma)$ has a circuit cover if and only if $\epsilon(G,\sigma)\ne 1$.
\end{observation}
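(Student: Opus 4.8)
The plan is to derive the statement as an immediate consequence of the two characterizations recalled just above, by chaining their biconditionals. I would first invoke Bouchet's theorem \cite{Bo}, which asserts that an arbitrary signed graph $(G,\sigma)$ has the circuit cover property if and only if it is flow-admissible; this equivalence carries no connectivity hypothesis, so it applies verbatim to our $2$-edge-connected $(G,\sigma)$. Of the two directions, the backward one is the elementary remark already made in the excerpt: every edge of a circuit cover lies on a positive cycle or a barbell, each of which carries a nowhere-zero $2$-flow respectively $3$-flow, so summing these flows produces a nowhere-zero flow on $(G,\sigma)$. The substantive content is the forward direction, Bouchet's theorem proper, which realizes a nowhere-zero integer flow as a suitable combination of the flows carried by positive cycles and barbells whose supports then cover $E(G)$; for my purposes I simply cite it.

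Second, I would apply the theorem of M\'a\v{c}ajov\'a and \v{S}koviera \cite{MS}: for a $2$-edge-connected signed graph, flow-admissibility holds if and only if $\epsilon(G,\sigma)\ne 1$. Here the $2$-edge-connectivity hypothesis is genuinely used, since in the presence of cut-edges the characterization of flow-admissibility is more delicate; this is precisely why the hypothesis appears in the statement of the observation even though Bouchet's half does not require it. Composing the two equivalences
\[
(G,\sigma)\text{ has a circuit cover}\iff (G,\sigma)\text{ is flow-admissible}\iff \epsilon(G,\sigma)\ne 1
\]
yields the desired conclusion by transitivity of the biconditional.

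The only point requiring care, rather than a genuine obstacle, is to confirm that the terminology is consistent across the two sources: Bouchet's \emph{circuit cover property} must be read as the existence of a family of circuits covering every edge (our definition of a circuit cover), and \emph{flow-admissible} must denote the existence of a nowhere-zero integer flow in both references. Once these identifications are fixed, no further argument is needed and the observation follows.
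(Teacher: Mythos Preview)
Your proposal is correct and matches the paper's own justification essentially verbatim: the paper derives the observation by combining Bouchet's equivalence between the circuit cover property and flow-admissibility with the M\'a\v{c}ajov\'a--\v{S}koviera characterization of flow-admissibility for $2$-edge-connected signed graphs as $\epsilon(G,\sigma)\ne 1$. Your additional remarks on which direction of Bouchet's theorem is elementary and on the role of the $2$-edge-connectivity hypothesis are accurate elaborations but not needed for the argument.
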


If $(G,\sigma)$ has the smallest number of negative edges, an edge cut $S$ has at most half number of negative edges. Otherwise, apply the switch operation on $S$ and the number of negative edges of $(G,\sigma)$ is reduced, contradicting that $(G,\sigma)$ has the smallest number of negative edges.

 \begin{observation}\label{ob:cut}
If a signed graph $(G,\sigma)$ with $\epsilon(G,\sigma)$ negative edges, then every edge cut $S$ contains at most
$|S|/2$ negative edges.
\end{observation}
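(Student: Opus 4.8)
\textbf{Proof proposal for Observation~\ref{ob:cut}.}
The plan is to argue by contradiction, exploiting the fact that switching along an edge cut preserves equivalence of signatures while changing the sign of exactly the edges in that cut. Suppose, for contradiction, that $(G,\sigma)$ realizes the negativeness $\epsilon(G,\sigma)$ (that is, $\sigma$ has the smallest number of negative edges among all signatures equivalent to itself), yet some edge cut $S$ contains strictly more than $|S|/2$ negative edges. Write $S=S^-\cup S^+$, where $S^-$ is the set of negative edges in $S$ and $S^+$ the set of positive edges in $S$; by assumption $|S^-|>|S|/2$, equivalently $|S^-|>|S^+|$.

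First I would apply the switch operation $\zeta$ on the cut $S$, producing the signature $\sigma'$ defined by $\sigma'(e)=\zeta(e)\cdot\sigma(e)$. By the definition of equivalence given just before Observation~\ref{lem:equ}, $\sigma'$ is equivalent to $\sigma$. Next I would count how switching changes the total number of negative edges. For an edge $e\notin S$ we have $\zeta(e)=1$, so $\sigma'(e)=\sigma(e)$ and its sign is unchanged; hence every edge outside $S$ contributes identically to the negative-edge counts of $\sigma$ and $\sigma'$. For an edge $e\in S$ we have $\zeta(e)=-1$, so $\sigma'(e)=-\sigma(e)$: each negative edge of $S$ (there are $|S^-|$ of them) becomes positive, and each positive edge of $S$ (there are $|S^+|$ of them) becomes negative. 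Therefore the number of negative edges of $\sigma'$ equals the number of negative edges of $\sigma$ minus $|S^-|$ plus $|S^+|$, i.e.\ it decreases by $|S^-|-|S^+|$.

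The key step is now immediate from the inequality $|S^-|>|S^+|$: the quantity $|S^-|-|S^+|$ is strictly positive, so $\sigma'$ has strictly fewer negative edges than $\sigma$. Since $\sigma'$ is equivalent to $\sigma$, this contradicts the assumption that $\sigma$ attains the minimum number of negative edges over its equivalence class, namely $\epsilon(G,\sigma)$. Hence no such cut $S$ can exist, and every edge cut $S$ contains at most $|S|/2$ negative edges, as claimed.

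I do not expect any genuine obstacle here; the statement is elementary once the switching bookkeeping is set up, and the only point requiring a little care is to verify that switching affects precisely the edges of $S$ and leaves all other signs intact, so that the net change in the count of negative edges is exactly $|S^+|-|S^-|$. The argument is essentially the one already sketched in prose in the paragraph preceding the statement; my proposal simply makes the counting explicit.
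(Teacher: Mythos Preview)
Your proposal is correct and takes essentially the same approach as the paper: assume some cut $S$ has more than $|S|/2$ negative edges, apply the switch operation on $S$, and observe that the resulting equivalent signature has strictly fewer negative edges, contradicting minimality. The paper states this argument in a single sentence in the paragraph preceding the observation; your version simply makes the counting $|S^-|-|S^+|>0$ explicit.
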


A connected graph $H$ is called a {\em cycle-tree} if it has no vertices of degree-1 and all cycles of $H$ are edge-disjoint. If $H$ is a cycle-tree, then the graph obtained  from $H$ by contracting all edges in cycles is a tree. In other words, a cycle-tree can be obtained from a tree by blowing up all leaf vertices and some non-leaf vertices to edge disjoint cycles. A vertex $v$ of $H$ is a {\em cutvertex} if $H\backslash \{v\}$ has more components than $H$. A cutvertex $v$ is said to {\em separate} a graph $H$ into $H_1$ and $H_2$ if $H_1\cup H_2=H$ and $H_1\cap H_2=v$. Note that both $H_1$ and $H_2$ are connected since
$H$ is connected.
A cycle $D$ of $H$ is a {\em leaf-cycle} if $H$ has a vertex $v$ separating $D$ and $H\backslash (V(D)\backslash \{v\})$.

A {\em signed cycle-tree} $(H,\sigma)$ is a signed graph such that $H$ is a cycle-tree and {\bf every cycle of $(H,\sigma)$ contains at least one
 negative edge}.
Let $\mathcal F$ be a family of circuits of $(H,\sigma)$. A cycle $D$ of $(H,\sigma)$ is covered {\em $t$ times} ($t\ge 1$) by $\mathcal F$ if every edge of $D$ is covered by $\mathcal F$ and $\sum_{D'\in \mathcal F}|E(D)\cap E(D')|=t|E(D)|$, where $t$ is a rational number.

\begin{lemma}\label{lem:tree-circuit-cover}
Let $(H,\sigma)$ be a signed cycle-tree with an even number of negative cycles.
Then $(H,\sigma)$ has a family of circuits $\mathcal F$ which covers all leaf-cycles once and all
other cycles at most 3/2-times.
\end{lemma}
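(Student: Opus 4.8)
The plan is to induct on the number of cycles in the cycle-tree $H$. Since $H$ is a cycle-tree, contracting all cycles gives a tree $T$; the leaves of $T$ correspond precisely to the leaf-cycles of $H$, and since $(H,\sigma)$ has at least one cycle and every cycle carries a negative edge, $H$ has at least one leaf-cycle. The base case is when $H$ has exactly two cycles (if $H$ had one cycle it would have to be negative, contradicting the even-negative-cycle hypothesis, unless we allow zero cycles — I will handle the degenerate small cases directly): two negative cycles $D_1,D_2$ joined by a path $P$ form a barbell, which is itself a single circuit covering both leaf-cycles exactly once and nothing else, so $\mathcal F$ consisting of that one barbell works.

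For the inductive step, I would pick a leaf-cycle $D$ of $H$, separated from the rest by a cutvertex $v$, and write $H = H_1 \cup H_2$ with $H_1 \cap H_2 = \{v\}$, where $H_1$ is the component containing $D$ (so $D$ is a cycle in $H_1$) and $H_2 = H \setminus (V(D)\setminus\{v\})$. The key case analysis is on the sign of $D$. If $D$ is positive, then $D$ is a positive cycle, hence a circuit; I cover $D$ once by this single circuit, then apply induction to $(H_2,\sigma)$ — which still has an even number of negative cycles — obtaining $\mathcal F_2$, and set $\mathcal F = \mathcal F_2 \cup \{D\}$. Here I must check that cycles of $H_2$ that were leaf-cycles of $H$ are still leaf-cycles of $H_2$ (true) and that $D$'s removal does not promote some internal cycle of $H$ to a leaf-cycle in a way that breaks the count — but since we only require leaf-cycles of the \emph{original} $H$ to be covered once, and leaf-cycles of $H_2$ that are internal in $H$ only need $\le 3/2$, this is consistent. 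If $D$ is negative, I pair it with a second negative cycle: since the total number of negative cycles is even, there is another negative cycle $D'$ somewhere in $H$; I walk along $T$ from $D$ toward $D'$ and let $D'$ be the first negative cycle encountered other than $D$ (so the $D$–$D'$ path in $H$ passes only through positive cycles, which it traverses via a sub-path of each). Then $D \cup (\text{path}) \cup D'$ is a barbell, a single circuit $B$. Covering by $B$ puts $D$ at multiplicity $1$ (good, it is a leaf-cycle), $D'$ at multiplicity $1$, and each intermediate positive cycle $C$ at multiplicity $|E(C)\cap E(B)|/|E(C)| < 1$. After removing $D$ (and suppressing resulting degree-2 issues), recurse on the smaller cycle-tree; the positive cycles touched by $B$ now need only $\le 3/2 - (\text{amount already covered})$, and since the recursive family covers each at most $3/2$, the sum might exceed $3/2$ — so the real work is arranging that $B$ is chosen to overlap each intermediate cycle in a way that leaves room, or re-distributing.

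The main obstacle, and where the $3/2$ constant is genuinely used, is exactly this bookkeeping when barbells pass through positive internal cycles: a positive internal cycle $C$ could be traversed by one barbell $B$ (using, say, half its edges) and then needs to be covered again by the recursive family, and naively $1/2 + 3/2 > 3/2$. The fix I would pursue is to be more careful about what the induction hypothesis delivers on a cycle that has already been partially covered — i.e., strengthen the statement to: given a cycle-tree with an even number of negative cycles and a specified set of "already partially covered" cycles each missing an arc, produce circuits covering the leaf-cycles once, the missing arcs of partially-covered cycles, and everything at total multiplicity $\le 3/2$ counting the pre-existing partial coverage. Alternatively, and probably cleaner, route \emph{each} positive internal cycle through by at most one barbell and ensure when it is later a leaf-cycle of the residual tree it is covered by a single positive-cycle circuit (multiplicity $1$), giving total $\le 1/2 + 1 = 3/2$; the constant $3/2$ is exactly tight for a positive cycle that is "used once as a pass-through and once on its own." Making this routing consistent across all branches of $T$ simultaneously — so that no positive cycle is a pass-through for two different barbells — is the crux, and I expect it follows by pairing up the negative cycles along $T$ greedily (nearest-pair matching in the tree metric) so that the barbell paths are edge-disjoint on the contracted tree $T$, hence each internal positive cycle lies on at most one barbell path.
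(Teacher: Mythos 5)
Your reduction is sound in the easy cases (the barbell base case and the positive leaf-cycle case), but the negative-leaf-cycle step is where the lemma actually lives, and you have correctly located the obstruction without overcoming it. Two concrete failures remain. First, the partner cycle $D'$ of your barbell $B$ is covered once by $B$ and then covered again by the recursive family on the residual tree, which knows nothing about $B$; if $D'$ happens to be a leaf-cycle of the original $H$ this already violates the ``exactly once'' requirement, and in any case $1+3/2>3/2$. Second, your proposed repair --- pair the negative cycles by nearest-pair matching so that the barbell paths are edge-disjoint in the contracted tree $T$ --- is neither proved nor sufficient: edge-disjointness in $T$ does not prevent two barbells from passing through the same positive cycle $C$ (they enter and leave it at different attachment vertices), and when the attachment points of two pairs interleave in the cyclic order around $C$, every choice of connecting arcs forces the two barbells to overlap on an arc of $C$, so the multiplicity bookkeeping breaks again.

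The paper's proof avoids peeling off one leaf-cycle at a time. It takes a minimal counterexample and first shows (the Claim) that no cutvertex splits $H$ into two pieces each containing an even number of negative cycles; hence every component of $H\setminus E(C)$, for a suitably chosen cycle $C$, has an odd number of negative cycles, so the number of components is even when $C$ is positive and odd when $C$ is negative. It then pairs the components \emph{consecutively in their cyclic order around $C$} as $P_1,Q_1,\dots,P_k,Q_k$, so that the connecting segments $S_i$ of $C$ are pairwise disjoint and, after choosing the orientation, satisfy $\sum_{i}|E(S_i)|\le |E(C)|/2$. Each piece $P_i\cup S_i\cup Q_i$ is a strictly smaller signed cycle-tree with an even number of negative cycles and is handled by minimality, while $C$ itself is covered once by the circuit $C$ (or, when $C$ is negative, once as a leaf-cycle of the extra piece $Q_0\cup C$) plus at most $|E(C)|/2$ from the segments, for a total of at most $3/2$. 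This simultaneous split at a single cycle, with the consecutive pairing around it, is exactly the global routing consistency you were hoping the greedy matching would deliver; without it, or some equivalent device such as your strengthened induction hypothesis carried out in full, the inductive step does not close.
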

\begin{proof}
Let $(H,\sigma)$ be a counterexample with the smallest number of edges. First, we have
the following claim:\medskip

{\bf Claim:}  {\sl $(H,\sigma)$ does not contain a cutvertex $v$ which separates $H$ into two subgraphs $H_1$ and $H_2$ such that both $(H_1,\sigma)$ and $(H_2,\sigma)$ contain an even number of negative cycles.}
\medskip

{\em Proof of Claim:} suppose to the contrary that $(H,\sigma)$ does have a such vertex $v$. Since both $H_1$ and $H_2$ are connected, both of them contains a cycle-tree with an even number of negative cycles. We may assume that $H_i$ is a cycle-tree. (If $H_i$ is not a cycle-tree, its maximum connected subgraph $H_i'$ without vertices of degree 1 is a cycle-tree. Then use $H_i'$ instead.)
Furthermore, a cycle of $H$ is contained in either $H_1$ or $H_2$.

Since $(H,\sigma)$ is a counterexample to the lemma with minimum number of edges and $|E(H_i)|<|E(H)|$, both
$(H_1,\sigma)$ and $(H_2, \sigma)$ have a family of circuits covering leaf-cycle
exactly once and other cycles at most 3/2-times. Denote the two families of circuits by $\mathcal F_1$
and $\mathcal F_2$ respectively. As $H_1$ and $H_2$ are separated by $v$, it follows that $(H_1,\sigma)$ and $(H_2,\sigma)$ have no cycle in common. Because a leaf-cycle of $(H,\sigma)$ is either a leaf-cycle of $(H_1,\sigma)$ or $(H_2,\sigma)$, it follows that $\mathcal F=\mathcal F_1\cup \mathcal F_2$ is a family of
circuits of $(H,\sigma)$ which cover leaf-cycles once and other cycles at most 3/2-times,  contradicting that $(H,\sigma)$ is a counterexample. This completes the proof of Claim.\medskip

In the following, we may assume first that  $(H,\sigma)$ contains a positive cycle $C$. Since $H$ is a cycle-tree, every component of $H\backslash E(C)$ has exactly one vertex on $C$, which is a cutvertex. By Claim, every component of $H\backslash E(C)$ contains an odd number of negative cycles. So the totally number of components of $H\backslash E(C)$ is even because $(H,\sigma)$ contains an even number of negative cycles.  Denote these components by $P_1, Q_1,
P_2,Q_2,\cdots, P_k, Q_k$, which appear in clockwise order along the cycle $C$.

Let $S_i$ be the segments of $C$ joining $P_i$ and $Q_i$ for $i=1,...,k$, and
$R_i$ be the segments of $C$ joining $Q_i$ and $P_{i+1}$ for $i=1,...,k$ (subscripts modulo $k$). Then $C=\bigcup_{i=1}^k (S_i\cup R_i)$.
Without loss of generality, assume that
\begin{equation} \label{eq:c/2}
\sum_{i=1}^k|E(S_i)|\le |E(C)| /2 \le \sum_{i=1}^k|E(R_i)|.
\end{equation}
Note that each component $(P_i\cup S_i\cup Q_i, \sigma)$ ($i=1,...,k$) of $H\backslash E(\cup_{i}^k R_i)$ is a signed cycle-tree with an even number of negative cycles. Because $|E(P_i\cup S_i\cup Q_i)|<|E(H)|$ and $(H,\sigma)$ is a counterexample with the smallest number of edges,
the signed cycle-tree $(P_i\cup S_i\cup Q_i, \sigma)$  has a desired family of circuits $\mathcal F_i$. Let
\[\mathcal F:=(\bigcup_{i=1}^k \mathcal F_i)\cup \{C\}.\]
By (\ref{eq:c/2}), $C$ is covered by $\mathcal F$ at most 3/2-times. Note that every leaf-cycle of $(H,\sigma)$ is also a leaf-cycle of $(P_i\cup S_i\cup Q_i, \sigma)$ for some unique $i\in \{1,...,k\}$. Therefore, $\mathcal F$ is a desired family of circuits of $(H,\sigma)$, contradicting that $(H,\sigma)$ is a counterexample.
So $(H,\sigma)$ does not contain a positive cycle.

If $(H,\sigma)$ contains exactly two negative cycles, then $(H,\sigma)$ itself is a barbell, denoted by $B$. Then $\{B\}$ is a desired family of circuits. Hence assume that $(H,\sigma)$ has at least four negative cycles. Choose a negative cycle $D$ of $(H,\sigma)$ such that the number of components of $H\backslash E(D)$ is maximum over all cycles of $H$.
By Claim, every component has an odd number of negative cycles. Therefore, $H\backslash E(D)$ has an odd number of components which is at least three by the choice of $D$. By a similar argument as in the case
 when $D$ is positive, we can label these components  by $Q_0, P_1, Q_1,P_2, Q_2,..., P_k, Q_k$ ($k\ge 1$) in clockwise order along $D$ such that
\begin{equation}\label{eq:c/2-2}
\sum_{i=1}^k |E(S_i)|\le |E(D)|/2,
\end{equation}
where $S_i$ is a segment of $D$ joining $P_i$ and $Q_i$ for $i=1,...,k$.

Let $H_0=Q_0\cup D$ and $H_i=P_i\cup S_i\cup Q_i$ for $i=1,...,k$. Then each $(H_i,\sigma)$  ($i=0,...,k$) is a signed cycle-tree with an even number of negative cycles.
Since $k\ge 1$, $|V(H_i)|<|V(H)|$ for all $i\in \{0,...,k\}$. As $(H,\sigma)$ is a counterexample with smallest number of edges,
each $(H_i,\sigma)$ is not a counterexample and therefore  has a family of circuits $\mathcal F_i$ which covers all leaf-cycle of $H_i$ once and other cycle at most 3/2-times.  Let
\[\mathcal F=\bigcup_{i=0}^k \mathcal F_i.\]
Since $D$ is a leaf-cycle of $H_0$, it is covered by $\mathcal F_0$ once. By (\ref{eq:c/2-2}), all $\mathcal F_1,...,\mathcal F_k$ together cover at most half number edges of $D$. Therefore, $D$ is covered by $\mathcal F$ at most 3/2-times. Since any other cycle is covered by only one of $\mathcal F_i$'s,
it follows that $\mathcal F$ is a desired family of circuits of $(H,\sigma)$, a contradiction to that $(H,\sigma)$ is a counterexample. This completes the proof.
\end{proof}

\begin{theorem} \label{thm:tree-cover}
Let $(H,\sigma)$ be a signed cycle-tree with an even number of negative cycles.
Then $(H,\sigma)$ has a family of
circuits $\mathcal F$ covering all cycles with length
\[\ell (\mathcal F)\leq \frac 43 |E(H)|.\]
\end{theorem}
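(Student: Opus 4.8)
The plan is to prove Theorem~\ref{thm:tree-cover} by strong induction on $|E(H)|$, refining the inductive scheme used for Lemma~\ref{lem:tree-circuit-cover} and carrying a length estimate alongside it. The base configurations are a single positive cycle $C$ (take $\mathcal F=\{C\}$) and a single barbell $B$ (take $\mathcal F=\{B\}$); in both, $\ell(\mathcal F)=|E(H)|\le\frac43|E(H)|$. In the inductive step I would first perform the balanced-cutvertex reduction exactly as in Lemma~\ref{lem:tree-circuit-cover}: if a cutvertex $v$ separates $(H,\sigma)$ into two parts each containing an even number of negative cycles, replace each part by its maximal connected subgraph $H_i'$ without degree-$1$ vertices (still a signed cycle-tree with an even number of negative cycles), apply the induction hypothesis to get families $\mathcal F_1,\mathcal F_2$, and output $\mathcal F_1\cup\mathcal F_2$; since $|E(H_1')|+|E(H_2')|\le|E(H)|$ (pendant bridge-paths are counted in $|E(H)|$ but need not be covered) this gives $\ell(\mathcal F)\le\frac43|E(H)|$. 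Hence I may assume $(H,\sigma)$ is ``reduced'', i.e.\ every cutvertex has an odd number of negative cycles on one side.

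For a reduced $(H,\sigma)$ I would split around one cycle $C$, chosen positive if a positive cycle exists and otherwise a negative cycle $D$ maximising the number of components of $H\setminus E(D)$, as in Lemma~\ref{lem:tree-circuit-cover}. Label the components of $H\setminus E(C)$ as $c_1,\dots,c_m$ in cyclic order around $C$, with attachment vertices $v_1,\dots,v_m$ and the arcs $A_1,\dots,A_m$ of $C$ between consecutive $v_i$'s; by reducedness each $c_i$ has an odd number of negative cycles, so $m$ is even when $C$ is positive and odd when $C$ is negative (one $c_i$ then forms a barbell with $D$). Each consecutive ``component pair plus arc'' $c_i\cup A_i\cup c_{i+1}$ is a signed cycle-tree with an even number of negative cycles and fewer edges, so the induction hypothesis produces a family covering its cycles; $\mathcal F$ is built from a suitable collection of these, together with either the single circuit $\{C\}$ or extra ``wrap-around'' barbells routed along arcs of $C$.

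The crux, and the point I expect to be hardest, is making the constant $\frac43$ survive, because no single recipe for handling $C$ works for all sizes. If one just adds $\{C\}$ and uses one of the two complementary cyclic pairings of the components (the shorter of which has connecting arcs of total length $a\le\frac12|E(C)|$), then each connecting arc is paid for twice --- once in $\{C\}$ and once inside the recursive family, which must route a negative cycle of $c_i$ out across $A_i$ --- and the estimate is $|E(C)|+\frac43(W+a)$ with $W=\sum_i|E(c_i)|$, which is $\le\frac43|E(H)|=\frac43(W+|E(C)|)$ only when $a\le\frac14|E(C)|$. When $a$ is larger one should instead drop $\{C\}$ and cover $C$ entirely by routing barbells around it along the two complementary arc-families; this doubles the component contribution, so it helps only when $W$ is small relative to $|E(C)|$, and it relies on the observation that a recursive sub-cover which is itself a barbell or a single cycle costs exactly its edge-count rather than $\frac43$ of it --- precisely the slack the wrap-around case needs. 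Thus the argument should split on the size of $|E(C)|$ against $W$ and $a$, verify that at the crossover both options already meet $\frac43|E(H)|$, and track the barbell formed by $D$ and its distinguished component in the negative-cycle case; a small tight instance --- a negative cycle, a long positive cycle, and a negative cycle in a path, the two negative cycles attached at antipodal points of the positive cycle --- shows the $\frac43$ is best possible and fixes where the crossover must sit. Organising this case analysis cleanly, so that the two strategies cover all configurations with the constant never exceeding $\frac43$, is where the real work lies; the rest is the template of Lemma~\ref{lem:tree-circuit-cover}.
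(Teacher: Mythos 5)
Your proposal gets the base cases, the balanced-cutvertex reduction, and even the extremal configuration showing $\tfrac43$ is tight, but the step you yourself flag as ``where the real work lies'' is a genuine gap, not a deferred verification, and I do not believe it closes as described. The root problem is that your induction hypothesis records only the aggregate bound $\ell\le\tfrac43|E(\cdot)|$, which is too weak to recurse with. Your Option A ($\{C\}$ plus the shorter pairing) costs up to $|E(C)|+\tfrac43(W+a)$ and needs $a\le\tfrac14|E(C)|$; your Option B (both complementary pairings, no $\{C\}$) costs up to $\tfrac43(2W+|E(C)|)$ and is salvageable only when the recursive sub-covers beat their $\tfrac43$ bound by a definite margin, which you can certify only when every component is a single negative cycle. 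As soon as a component is a nontrivial cycle-tree the two regimes fail to meet. Concretely, take $C$ positive of length $L$ with two antipodal attachment points (so $a=L/2$), a single negative cycle $D_1$ of length $n$ at one, and at the other a negative cycle $T_2$ of length $t$ carrying two further negative leaf-cycles $T_1,T_3$ of length $n$ each, attached at the two points of $T_2$ trisecting it. The best sub-cover of a component pair here has length $3n+L/2+\tfrac43t$, so Option A totals $\tfrac32L+3n+\tfrac43t$ (failing once $L>6n$) while Option B totals $L+6n+\tfrac83t$ (failing once $L<6n+4t$); for $6n<L<6n+4t$ both exceed $\tfrac43|E(H)|$, yet the theorem's bound holds for this $(H,\sigma)$. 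The induction returns no information about how a sub-cover distributes its length over leaf-cycles, non-leaf cycles and bridges, and that is precisely the information your case split needs.

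The paper sidesteps this by proving the strengthened statement first: Lemma~\ref{lem:tree-circuit-cover} produces a family covering every leaf-cycle exactly once and every other cycle at most $3/2$ times, hence of length at most $|E(H)|+l/2$ where $l$ is the total length of the non-leaf cycles. The proof of Theorem~\ref{thm:tree-cover} then uses induction only to strip positive leaf-cycles; in the remaining case it writes down a second, explicit family $\mathcal F_1$ of barbells joining consecutive negative leaf-cycles along the outerplanar boundary walk, of length exactly $2|E(H)|-l$, and takes the shorter of the two families. The weighted average $\tfrac13\bigl(2|E(H)|-l\bigr)+\tfrac23\bigl(|E(H)|+l/2\bigr)=\tfrac43|E(H)|$ makes the parameter $l$ cancel; in the example above $\mathcal F_1$ alone already suffices. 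If you want to keep your architecture, reformulate your recursion as a proof of Lemma~\ref{lem:tree-circuit-cover} (it is essentially that) and then add the second family and the averaging step, rather than trying to force the constant through a single recursive construction.
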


\begin{proof}
Use induction on the number of edges of $(H,\sigma)$. If $(H,\sigma)$ has no edges, then the theorem holds trivially by taking $\mathcal F=\emptyset$. So in the following, assume that the theorem holds for all signed cycle-trees with at most $|E(H)|-1$ edges.

First, assume that $(H,\sigma)$ contains a positive leaf-cycle $C$. Let $H'\subset H$ be a cycle-tree containing all cycles of $H$ except $C$. Then $(H',\sigma)$ has an even number of negative cycles. Since $|E(H')|<|E(H)|$, by inductive hypothesis, $(H', \sigma)$ has a family of circuits $\mathcal F'$ covering all cycles of $(H',\sigma)$ with length $\ell(\mathcal F')\le 4|E(H')|/3$. Then
$ \mathcal F = \mathcal F'\cup \{C\}$  is a family of circuits covering all cycles of $(H,\sigma)$ because a cycle of $H$ is either a cycle of $H'$ or $C$. The length of $\mathcal F$ is
\[ \ell(\mathcal F)= \ell (\mathcal F')+|E(C)| \le \frac 4 3 |E(H')| +|E(C)|\le \frac 4 3 (|E(H')|+|E(C)|) \le \frac 4 3 |E(H)| .\]
So $(H,\sigma)$ has a family of circuits $\mathcal F$ covering all cycles with length at most $4|E(H)|/3$.

In the following, assume that all leaf-cycles of $(H,\sigma)$ are negative.
Let $D_1, D_2, ..., D_k$ be all leaf-cycles. Let $l$ be the total length of non-leaf cycles of $H$. Since $H$ is an outerplanar graph, $H$ has an embedding in the plane such that all vertices of $H$ appear on the boundary of the infinite face.
Let $W$ be the closed walk bounding the infinite face. Then all vertices of a leaf-cycle $D_i$ appears as a consecutive segment in $W$.
Without loss of generality, assume that the leaf-cycles of $H$ appears in $W$ in the order $D_1, D_2,...,D_k,$.
Let $S_{i,i+1}$ be the segment of $W$ joining $D_i$ and $D_{i+1}$ (subscribes modulo $k$) such that all internal vertices of $S_{i,i+1}$ do not belong to any leaf-cycle of $(H,\sigma)$. Then $S_{i,i+1}$ is a path because $H$ does not have vertices of degree 1.
Let \[B_i=D_i\cup D_{i+1}\cup S_{i,i+1} \mbox{ for } i=1,2,..., k \mbox{ (subscribes modulo } k).\]
Then $B_i$ is a barbell for $i=1,...,k$. Let $\mathcal F_1=\{B_1, B_2,...,B_k\}$, which
covers all edges in non-leaf cycles exactly once and all other edges twice.
So $\ell(\mathcal  F_1)= 2|E(H)|-l$.

By Lemma~\ref{lem:tree-circuit-cover}, $(H,\sigma)$ has a  family of circuits $\mathcal F_2$ covering all cycles
with length $\ell(\mathcal F_2)\le |E(H)|+l/2$.
Let $\mathcal F$ be the family of circuits  with the smaller length between $\mathcal F_1$ and $\mathcal F_2$.  Then
\[\ell(F) \le \frac 1 3\big ( \ell (\mathcal F_1)+\ell (\mathcal F_2)+\ell( \mathcal F_2)\big)=\frac 1 3 \big ((2|E(H)|-l)+2 (|E(H)|+l/2  )\big )=\frac 4 3 |E(H)|.\]
 This completes the proof.
 \end{proof}

\section{Shortest circuit covers}

In this section, we consider the shortest circuit covers of cubic signed graphs. Let $(G,\sigma)$ be a 2-edge-connected signed
graph and let $E^-(G,\sigma):=\{e\ | \sigma(e)=-1\}$ and $E^+(G,\sigma):=\{e\ | \sigma(e)=1\}$. By Observation~\ref{lem:equ},
 we may always assume $(G,\sigma)$ has the smallest number of negative edges over all equivalent signatures of $\sigma$. In other words, $|E^-(G,\sigma)|=\epsilon(G,\sigma)$.  Let $G^+$ be the subgraph of $G$ induced by edges in $E^+(G,\sigma)$, i.e., $G^+=G\backslash  E^-(G,\sigma)$. By Observation~\ref{ob:cut}, for any edge-cut $S$, the following inequalities hold
\begin{equation}\label{eq:cut}
|E^-(G,\sigma)\cap S|\le |S|/2\le |E^+(G,\sigma)\cap S|.
\end{equation}
So $G^+$  is connected spanning subgraph of $G$.

\begin{lemma}\label{lem:cutedge}
Let $(G,\sigma)$ be a 2-edge-connected signed graph with $|E^-(G,\sigma)|=\epsilon(G,\sigma)$.
If $(G,\sigma)$ has a family of circuits $\mathcal F$ such that every negative edge $e$ is contained in a cycle of $\bigcup\limits_{C\in \mathcal F} C$,
then $\mathcal F$ covers all cutedges of $G^+=G\backslash  E^-(G,\sigma)$.
\end{lemma}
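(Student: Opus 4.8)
The plan is to analyze the structure of a cutedge $e$ of $G^+$ and show that the hypothesis on $\mathcal F$ forces $e$ to be covered. Let $e$ be a cutedge of $G^+$; so $G^+ \setminus e$ has exactly two components, say $A$ and $B$, with $A \cup B \cup \{e\} = G^+$. I first want to understand the edge-cut of the original graph $G$ that separates $V(A)$ from $V(B)$. Let $S$ be this edge-cut of $G$; since $G$ is $2$-edge-connected, $|S| \ge 2$, and $S$ consists of $e$ together with some set of negative edges (every positive edge of $G$ lies in $G^+$, hence inside $A$, inside $B$, or is $e$ itself). By inequality (\ref{eq:cut}) applied to $S$, we have $|E^-(G,\sigma) \cap S| \le |S|/2$; since $e$ is the unique positive edge of $S$, this gives $|S \setminus \{e\}| \le |S|/2$, i.e.\ $|S| \le 2$, so in fact $|S| = 2$ and $S = \{e, f\}$ for exactly one negative edge $f \in E^-(G,\sigma)$.

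Now I invoke the hypothesis: the negative edge $f$ is contained in some cycle $D$ of $\bigcup_{C \in \mathcal F} C$, where $D$ is a cycle lying inside one of the circuits $C \in \mathcal F$ covering $f$. A cycle of $G$ meets the edge-cut $S$ in an even number of edges. Since $D$ uses the edge $f \in S$, it must use an even number of edges of $S = \{e, f\}$ in total, and as $|S| = 2$ this forces $D$ to use $e$ as well. Hence $e$ is covered by the circuit $C \supseteq D$, and therefore $e$ is covered by $\mathcal F$. Since $e$ was an arbitrary cutedge of $G^+$, the family $\mathcal F$ covers all cutedges of $G^+$, as claimed.

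The main obstacle, and the only place that needs care, is the first paragraph: pinning down that the $G$-edge-cut separating the two sides of the cutedge $e$ has size exactly $2$. This rests on two ingredients already available in the excerpt — that $G$ is $2$-edge-connected (so the cut has size at least $2$) and that $(G,\sigma)$ is chosen with $\epsilon(G,\sigma)$ negative edges so that (\ref{eq:cut}) holds (so at most half the cut is negative, forcing the cut to be small once we know all but one of its edges is negative). One should double-check the claim that every positive edge of $G$ other than $e$ lies entirely within $A$ or within $B$: this is immediate because $G^+$ is spanned by its vertices, $G^+ \setminus e = A \sqcup B$ as graphs, and an edge of $G^+$ different from $e$ belongs to $G^+ \setminus e$, hence to $A$ or $B$. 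With $|S|=2$ in hand, the parity argument for cycles crossing an edge-cut (stated in the excerpt just before Observation~\ref{lem:equ}) closes the proof cleanly.
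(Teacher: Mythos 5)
Your proposal is correct and follows essentially the same argument as the paper: both identify the edge-cut $S$ of $G$ with $S\cap E^+(G,\sigma)=\{e\}$, use inequality (\ref{eq:cut}) together with 2-edge-connectivity to force $|S|=2$, and then apply the parity of a cycle's intersection with an edge-cut to conclude that the cycle through the negative edge of $S$ also covers $e$. The only cosmetic difference is that the paper phrases it as a proof by contradiction while you argue directly.
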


\begin{proof} Suppose to the contrary that $G^+$ has a cutedge $e$
which is not covered by any circuit in $\mathcal F$. Let $S$ be an edge-cut of $G$ such that $S\cap E^+=\{e\}$.
Since $|E^-(G,\sigma)|=\epsilon(G,\sigma)$, then  $|S|/2 \le |E^+(G,\sigma)\cap S|=1$ by (\ref{eq:cut}). The 2-edge-connectivity of $(G,\sigma)$ implies that $|S|=2$. Let $e'$ be the other edge in $S$. Then $e'\in E^-(G,\sigma)$. Note that $e'$ is contained by a cycle $D$ of $\bigcup\limits_{C\in \mathcal F} C$. Note that $|E(D)\cap S|$ is even. Therefore, the cycle $D$ contains $e$ too. So $e$ is covered by $\mathcal F$, a contradiction. This completes the proof.
\end{proof}

Let $(G,\sigma)$ be a 2-edge-connected flow-admissible cubic signed graph. In order to show that $(G,\sigma)$ has a small circuit cover, we need to find  a family of circuits with a suitable length to cover all negative edges and all bridges of $G^+$, and another family of circuits to cover the rest of edges.
By Theorem~\ref{thm:5/3}, there is a family of circuits of $G^+$ covering all edges of $G^+$ except these cutedges with length at most $5|E(G^+)|/3$. Hence, by Lemma~\ref{lem:cutedge}, it suffices to find a family of circuits $\mathcal F$ with a suitable length such that every edge of  $E^-(G,\sigma)$ is covered by a cycle of some circuit in $\mathcal F$.

Let $T$ be a spanning tree of $G^+$. Then $T$ is also a spanning tree of $G$ because $G^+$ is a spanning subgraph of $G$.
For any $ e\in E^-(G,\sigma)\subseteq E(G)\backslash E(T)$, let $D_e$ be the elementary cycle of $T\cup \{e\}$. Since $G$ is cubic, the symmetric difference of all cycles $D_e$, denoted by $\mathscr D$, consists of disjoint cycles. Let $Q$ consists of  all cycles of $\mathscr  D$ with negative edges.  Because a negative edge $e$ is contained by only $D_e$, $Q$ contains all negative edges of $(G,\sigma)$, i.e., $E^-(G,\sigma)\subseteq E(Q)$. Let $H$ be a minimal connected subgraph of $G$ such that $Q\subseteq H\subseteq Q\cup T$. By the minimality of $H$, $H$ has no vertices of degree 1 and any edge $e$ of $E(H)\backslash E(Q)$ is a cutedge. (Otherwise, $H\backslash \{e\}$ is still connected and satisfies $Q\subseteq H\cup\{e\} \subseteq Q\cup T$, a contradiction to the minimality of $H$.) So $H/E(Q)$ is a tree and hence $H$ is a cycle-tree. So $(H,\sigma)$ is a signed cycle-tree of $(G,\sigma)$ such that $E^-(G,\sigma)\subseteq E(H,\sigma)$.

Before proceed to prove our main result\,---\,Theorem~\ref{thm:main}, we show a better bound for 2-edge-connected cubic signed graphs with even negativeness.  By Obeservation~\ref{ob:1-negative-edge}, a 2-edge-connected signed graph with even negativeness always has a circuit cover.

\begin{theorem}\label{thm:even}
Let $(G,\sigma)$ be a 2-edge-connected cubic signed  graph with even negativeness. Then \[\scc (G,\sigma) <\frac{23} 9 |E(G)|.\]
\end{theorem}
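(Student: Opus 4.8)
The plan is to combine the two families of circuits already developed in the excerpt. Let $(G,\sigma)$ be a 2-edge-connected cubic signed graph with even negativeness, and assume (by Observation~\ref{lem:equ}) that $|E^-(G,\sigma)|=\epsilon(G,\sigma)$ so that the cut inequality~(\ref{eq:cut}) holds and $G^+=G\setminus E^-(G,\sigma)$ is a connected spanning subgraph. Following the construction sketched just before the statement, I would build the signed cycle-tree $(H,\sigma)\subseteq(G,\sigma)$ with $E^-(G,\sigma)\subseteq E(H)$, obtained from a spanning tree $T$ of $G^+$ by taking the symmetric difference $\mathscr D$ of the elementary cycles $D_e$ ($e\in E^-$), extracting the sub-collection $Q$ of cycles carrying negative edges, and taking a minimal connected $H$ with $Q\subseteq H\subseteq Q\cup T$. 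The key point is that each cycle of $H$ contains a negative edge (so $(H,\sigma)$ is a genuine signed cycle-tree), and since $\epsilon(G,\sigma)$ is even and negative edges lie only on negative cycles of $H$, the number of negative cycles of $(H,\sigma)$ is even — so Theorem~\ref{thm:tree-cover} applies.

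Next I would invoke Theorem~\ref{thm:tree-cover} to get a family $\mathcal F_1$ of circuits covering all cycles of $(H,\sigma)$ with $\ell(\mathcal F_1)\le \tfrac43|E(H)|$; in particular every negative edge of $(G,\sigma)$ is covered, and it lies on a cycle of $\bigcup_{C\in\mathcal F_1}C$. By Lemma~\ref{lem:cutedge}, $\mathcal F_1$ also covers every cutedge of $G^+$. Now $G^+$ is a 2-edge-connected graph after suppressing its cutedges (or more precisely, the blocks of $G^+$ are 2-edge-connected), so Theorem~\ref{thm:5/3} (in Fan's weighted form, to handle the block structure cleanly) gives a family $\mathcal F_2$ of circuits covering all non-cutedges of $G^+$ with $\ell(\mathcal F_2)\le\tfrac53(|E(G^+)|-b)$, where $b$ is the number of cutedges of $G^+$. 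Then $\mathcal F=\mathcal F_1\cup\mathcal F_2$ is a circuit cover of $(G,\sigma)$: every edge of $E^-$ is covered by $\mathcal F_1$, every cutedge of $G^+$ by $\mathcal F_1$, and every remaining edge of $G^+$ by $\mathcal F_2$.

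It remains to bound $\ell(\mathcal F)\le\tfrac43|E(H)|+\tfrac53(|E(G^+)|-b)$ in terms of $|E(G)|$. Here $|E(G^+)|=|E(G)|-\epsilon$ where $\epsilon=\epsilon(G,\sigma)$, and $|E(H)|\le |E(Q)|+|E(T)|-b'\le \cdots$ must be controlled: since $G$ is cubic and $Q$ is a union of disjoint cycles inside $G$ with $|E^-|\subseteq E(Q)$, one gets a bound of the shape $|E(Q)|\le 2\epsilon$ (each negative edge "pays for" at most a bounded number of positive edges on its cycle under the cubic/min-negativeness constraints), and the extra tree-edges of $H$ are exactly the $b$ cutedges of $G^+$ that $H$ uses, which can be absorbed. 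Plugging $|E(H)|=O(\epsilon)+b$-type estimates and using $\epsilon\le |E(G)|/?$ bounds coming from flow-admissibility and~(\ref{eq:cut}), the arithmetic should collapse to $\ell(\mathcal F)<\tfrac{23}{9}|E(G)|$. I expect the main obstacle to be exactly this last bookkeeping: getting a sharp enough bound on $|E(H)|$ (equivalently on the total length of $Q$ and on the number of auxiliary tree edges $b$) relative to $\epsilon$ and $|E(G)|$, and choosing how to split the "averaging" between $\mathcal F_1$ and $\mathcal F_2$ — possibly one also needs, as in Lemma~\ref{lem:tree-circuit-cover}, to take the shorter of two candidate families on $G^+$ to squeeze the constant down from what the naive $\tfrac43+\tfrac53$ combination would give to the claimed $\tfrac{23}{9}$.
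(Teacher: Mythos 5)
Your architecture is exactly the paper's: build the signed cycle-tree $(H,\sigma)$ containing all negative edges, cover all of its cycles by Theorem~\ref{thm:tree-cover} with a family $\mathcal F_1$ of length at most $\frac43|E(H)|$, invoke Lemma~\ref{lem:cutedge} to conclude that $\mathcal F_1$ also covers the cutedges of $G^+$, and cover the remaining edges of $G^+$ by Theorem~\ref{thm:5/3}. However, the step you yourself flag as ``the main obstacle'' is where the argument genuinely breaks down, and your proposed route to close it would fail. You try to bound $|E(H)|$ via a bound of the shape $|E(Q)|\le 2\epsilon$, i.e.\ in terms of $\epsilon(G,\sigma)$ alone. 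That is false: a single cycle of the symmetric difference $\mathscr D$ carrying only one or two negative edges can run through almost the entire spanning tree $T$, so neither $|E(Q)|$ nor $|E(H)|$ is $O(\epsilon)$. The correct observation is structural and much simpler: every $D_e$ consists of $e$ together with edges of $T$, hence $E(H)\subseteq E(T)\cup E^-(G,\sigma)$ and
\[
|E(H)|\ \le\ |E(T)|+|E^-(G,\sigma)|\ =\ |V(G)|-1+\epsilon(G,\sigma)\ =\ \tfrac23|E(G)|+\epsilon(G,\sigma)-1,
\]
using that $G$ is cubic. This gives $\ell(\mathcal F_1)\le \frac89|E(G)|+\frac43\epsilon(G,\sigma)-\frac43$, while $\ell(\mathcal F_2)\le \frac53\bigl(|E(G)|-\epsilon(G,\sigma)\bigr)$; the two $\epsilon$-terms combine to $-\frac13\epsilon(G,\sigma)$, so
\[
\ell(\mathcal F)\ \le\ \tfrac{23}{9}|E(G)|-\tfrac13\epsilon(G,\sigma)-\tfrac43\ <\ \tfrac{23}{9}|E(G)|.
\]
In particular, no averaging between two candidate covers of $G^+$ and no upper bound on $\epsilon(G,\sigma)$ in terms of $|E(G)|$ is needed; the naive $\frac43+\frac53$ combination already works once $|E(H)|$ is bounded correctly.

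Two smaller points. First, the case $\epsilon(G,\sigma)=0$ should be split off (the graph is then all-positive and Theorem~\ref{thm:5/3} finishes directly); the paper does this. Second, your justification that $(H,\sigma)$ has an even number of negative cycles --- ``negative edges lie only on negative cycles of $H$'' --- is not right, since a positive cycle of $H$ may contain (an even number of) negative edges; the correct argument is a parity count: the cycles of $H$ are edge-disjoint and together contain all $\epsilon(G,\sigma)$ negative edges, and a cycle is negative exactly when it contains an odd number of them, so the number of negative cycles is congruent to $\epsilon(G,\sigma)$ modulo $2$.
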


\begin{proof} If $\epsilon(G,\sigma)=0$, then $(G,\sigma)$ is a graph and hence $\scc(G,\sigma)\le 5|E(G)|/3$ by Theorem~\ref{thm:5/3}. The theorem holds immediately. So in the following, assume that $\epsilon(G,\sigma)\ge 2$ and $|E^-(G,\sigma)|=\epsilon(G,\sigma)$ by Observation~\ref{lem:equ}.

Recall that $(G,\sigma)$ has a signed cycle-tree $(H,\sigma)$ such that $E^-(G,\sigma)\subseteq E(H,\sigma)$.
Since $\epsilon(G,\sigma)$ is even, it follows that $(G,\sigma)$ has an even number of negative cycles. By Theorem~\ref{thm:tree-cover}, $(H,\sigma)$ has a family of circuits $\mathcal F_1$ which covers all cycles of $(H,\sigma)$ and hence covers all negative edges of $(G,\sigma)$ with length
\begin{equation}\label{eq:even}
\ell(\mathcal F_1)\le \frac 4 3 |E(H)|\le \frac 4 3 (|V(G)|-1+|E^-(G,\sigma)|)=\frac 8 9 |E(G)|+\frac 4 3 |E^-(G,\sigma)|-\frac 4 3.
\end{equation}

By Lemma~\ref{lem:cutedge}, $\mathcal F_1$ covers all cutedges of $G^+$. Deleting all cutedges from $G^+$, every component of the resulting graph is 2-edge-connected. By Theorem~\ref{thm:5/3}, all shortest circuit covers of these components together form a family of circuits $\mathcal F_2$ of $G^+$, which covers all edges of $G^+$ except cutedges with length
\[\ell (\mathcal F_2)\le \frac 5 3 |E(G^+)|=\frac 5 3 (|E(G)|-|E^-(G,\sigma)|).\]
So $\mathcal F=\mathcal F_1\cup \mathcal F_2$ is a circuit cover of $(G,\sigma)$
with length
\begin{equation*}\label{EQ:even}
\ell(\mathcal F)=\ell(\mathcal F_1)+\ell (\mathcal F_2)\le   ( \frac 8 9 |E(G)|+\frac 4 3 |E^-(G,\sigma)|-\frac 4 3)+ \frac 5 3 (|E(G)|-|E^-(G,\sigma)|)\le \frac{23} 9 |E(G)|-\frac 1 3 |E^-(G,\sigma)|-\frac 4 3 .
\end{equation*}
It follows that $\scc(G,\sigma)< 23|E(G)|/9$. So the theorem holds.
\end{proof}

In the following, we consider signed cubic graphs $(G,\sigma)$ with odd negativeness, i.e., $\epsilon(G,\sigma)$ is odd.
The {\it signed-girth} of a signed graph $(G,\sigma)$ is length of a shortest circuit containing negative edges, denoted by $g_s(G,\sigma)$.  Before proceed to prove our main result, we need some technical lemmas.

\begin{lemma}\label{lem:star}
Let $(G,\sigma)$ be a signed cubic graph, and $(N,\sigma)$ be a signed cycle-tree of $(G,\sigma)$. If $g_s(G,\sigma)\ge |E(G)|/3 +2$, then:

(1) $(G,\sigma)$ does not contain two disjoint circuits both containing negative edges;

(2) $(N,\sigma)$  has at most three leaf-cycles and at most one non-leaf cycle. Furthermore, if it has a non-leaf cycle, then all leaf-cycles are negative.
\end{lemma}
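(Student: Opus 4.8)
The plan is to exploit the hypothesis $g_s(G,\sigma)\ge |E(G)|/3+2$ twice: once as a counting bound forcing any circuit through a negative edge to be "long", and once to restrict how many edge-disjoint such circuits can fit inside a cubic graph.

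First I would prove part (1). Suppose $(G,\sigma)$ contained two disjoint circuits $C_1,C_2$, each carrying a negative edge. Each is a circuit through a negative edge, so by definition of the signed girth $|E(C_i)|\ge g_s(G,\sigma)\ge |E(G)|/3+2$ for $i=1,2$. Being disjoint, $|E(C_1)|+|E(C_2)|\le |E(G)|$, so $|E(G)|\ge 2|E(G)|/3+4$, i.e. $|E(G)|\ge 12$ — that by itself is not a contradiction, so the bare edge count is not quite enough. The extra leverage I would use is that a circuit is either a positive cycle or a barbell, and in a cubic graph two vertex-disjoint barbells/cycles leave very little room: more carefully, I would argue on vertices rather than edges. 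In a cubic graph a cycle on $t$ vertices uses $t$ edges, and the subgraph $C_1\cup C_2$ already has minimum degree $2$, so the number of edges of $G$ not in $C_1\cup C_2$ is at most $(|V(G)|-|V(C_1)|-|V(C_2)|)\cdot 3/2 + $ (a bounded correction from the at most one pendant path inside each barbell). Combining $|V(C_i)|\ge |E(C_i)| - 1 \ge |E(G)|/3+1$ (a cycle has as many vertices as edges; a barbell has one fewer) with $|E(G)| = 3|V(G)|/2$ should push $|V(C_1)|+|V(C_2)|$ past $|V(G)|$, the contradiction. The main obstacle here is the bookkeeping caused by the pendant path of a barbell, which can share vertices with the rest of $G$ without being itself a cycle; I expect to handle this by noting the path has length at most... well, at most $|E(G)| - 2g_s + 2$ edges, and absorbing it into the estimate.

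Next, part (2). Let $(N,\sigma)$ be a signed cycle-tree of $(G,\sigma)$; recall every cycle of $N$ contains a negative edge. If $N$ had two leaf-cycles $D,D'$ that are disjoint, then each of $D,D'$ is a negative cycle, and two disjoint negative cycles of $(N,\sigma)$ are contained in $(G,\sigma)$ either as two disjoint circuits directly — impossible by part (1) — or they combine into a single barbell circuit, in which case any \emph{other} cycle of $N$ would together with one of them give a second disjoint circuit, again contradicting (1). So $N$ can have at most ... I would argue that any two distinct cycles of a cycle-tree are either disjoint or meet in exactly one vertex (the structure of cycle-trees forces edge-disjointness and in fact near-disjointness). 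If $N$ had four or more leaf-cycles, some pair among them would be disjoint (only one cycle can pass through any given cutvertex "bottleneck", and a tree with $\ge 4$ leaves has two leaves whose paths to each other avoid a chosen third/fourth), producing two disjoint circuits-with-negative-edges, contradicting (1). Hence at most three leaf-cycles. For the non-leaf cycle claim: if $N$ has a non-leaf cycle $C_0$, then $N\backslash E(C_0)$ has $\ge 2$ components each containing a cycle, and each such cycle together with $C_0$ would again be contained in a barbell, so (1) forces there to be no \emph{further} disjointness — this pins down the count to exactly the leaf-cycles being the other cycles, i.e. at most... and moreover it forces each leaf-cycle to be negative: a positive leaf-cycle $D$ is itself a circuit, and $C_0$ lies in a barbell formed by two negative cycles of $N$ disjoint from $D$ (they exist since $C_0$ is non-leaf), giving two disjoint circuits-with-negative-edges (the barbell has one, $D$ — no, $D$ is positive...). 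I would instead argue: a positive leaf-cycle $D$ is a circuit by itself, and on the "other side" of $D$'s cutvertex the rest of $N$ still contains a negative cycle, hence a barbell-or-cycle circuit through a negative edge, disjoint from $D$'s interior — but $D$ carries no negative edge so this does not immediately contradict (1); the real point must be that $D$, being a cycle of the \emph{signed} cycle-tree $N$, contains a negative edge by definition, so $D$ is a negative cycle, and then disjointness of $D$ from a barbell through $C_0$ does contradict (1). I expect the last step to be the subtle one and will phrase it through the defining property "every cycle of $(N,\sigma)$ contains a negative edge" rather than through positivity.

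The single hardest step, I believe, is the cubic vertex-counting in part (1) with the barbell's pendant path; once that inequality is clean, part (2) is a structural consequence of "at most one circuit through negative edges" combined with the elementary leaf/cutvertex combinatorics of cycle-trees.
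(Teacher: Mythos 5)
Your part (1) is essentially the paper's own argument: the single inequality $|E(C_i)|\le |V(C_i)|+1$ (equality exactly for barbells), combined with $|V(C_1)|+|V(C_2)|\le |V(G)|=\tfrac{2}{3}|E(G)|$ and $|E(C_i)|\ge g_s(G,\sigma)\ge |E(G)|/3+2$, already gives $|V(C_1)|+|V(C_2)|\ge |V(G)|+2$, a contradiction. The ``pendant path bookkeeping'' you flag as the hardest step is a non-issue: a barbell on $v$ vertices has exactly $v+1$ edges however long its connecting path is, so no correction term and no $3/2$-per-vertex estimate are needed.

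Part (2), however, has a genuine gap rooted in conflating cycles with circuits. A negative cycle of $(G,\sigma)$ is \emph{not} a circuit (only positive cycles and barbells are), so a pair of disjoint negative leaf-cycles is not ``two disjoint circuits,'' and statement (1) says nothing about it. Hence your step ``if $N$ had four or more leaf-cycles, some pair among them would be disjoint, producing two disjoint circuits-with-negative-edges'' fails; worse, in a cubic graph any two distinct cycles of a cycle-tree are automatically vertex-disjoint, so your reasoning would already forbid \emph{two} leaf-cycles, contradicting the lemma's allowance of three. What is actually required is to \emph{pair up} the leaf-cycles into two barbells whose connecting paths are vertex-disjoint (a positive leaf-cycle counts as a circuit on its own and needs no partner); with four leaf-cycles such a disjoint pairing exists in a cubic cycle-tree, and only then does (1) bite. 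The same confusion affects your other two claims: your argument for ``at most one non-leaf cycle'' trails off without ever exhibiting two disjoint circuits (the paper takes two non-leaf cycles $D_1,D_2$, attaches to each a leaf-cycle on its far side by disjoint paths $P_1,P_2$, and notes that each $D_i'\cup P_i\cup D_i$ \emph{contains} a circuit), and in the final claim the whole point is that a positive leaf-cycle $D$ is a circuit precisely because it is positive while still carrying a negative edge; your conclusion ``so $D$ is a negative cycle'' would make $D$ fail to be a circuit and would void the appeal to (1).
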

\begin{proof}
If $(G,\sigma)$ has only one circuit, the lemma holds trivially. So assume that $(G,\sigma)$ has at least two distinct circuits. Let $C_1$ and $C_2$ be two distinct circuits. If $V(C_1)\cap V(C_2)=\emptyset$, then  $|V(C_1)|+|V(C_2)|\le |V(G)|$.
Note that $E(C_i)\leq |V(C_i)|+1$ ($i=1,2$) and equality holds if and only if $C_i$ is a barbell.
Since $G$ is cubic, $|V(G)|=2|E(G)|/3$.
It follows that
\[|E(C_1)|+|E(C_2)|\leq |V(C_1)|+|V(C_2)|+2\leq |V(G)|+2=\frac 2 3 |E(G)|+2.\]
Without loss of generality, assume that $|E(C_1)|\leq |E(C_2)|$. Hence $|E(C_1)|\le  |E(G)|/{3}+1$, contradicting $g_s(G,\sigma)\ge |E(G)|/3 +2$.  This completes the proof of (1).

Since $(G,\sigma)$ does not contain two disjoint circuits, every signed cycle-tree $(N,\sigma)$ of $(G,\sigma)$ does not contain two disjoint circuits neither. Hence $(N,\sigma)$ has at most three leaf-cycles.

If $(N,\sigma)$ has two non-leaf cycles $D_1$ and $D_2$, then there is a leaf cycle $D_i'$ is connected to $D_i$ by a path $P_i$ for $i=1$ and $2$ such that $P_1\cap P_2=\emptyset$. Then $D_i'\cup P_i\cup D_i$ contains a circuit for both $i=1$ and 2, contradicting $(N,\sigma)$ does not have two disjoint circuits. So $(N,\sigma)$ has at most one non-leaf cycle.

If $(N,\sigma)$ has exactly one non-leaf cycle $D_1$,  then $D_1$ is connected to at least two leaf-cycles. If one of the leaf-cycles $C$ is positive, the other leaf-cycles together with $D_1$ contains a circuit disjoint from $C$, a contradiction. This completes the proof.
 \end{proof}

\begin{lemma}\label{lem:bigcircuit}
Let $(G,\sigma)$ be a 2-edge-connected cubic signed graph with $\epsilon(G,\sigma)\ge 3$ negative edges, and $G^+$ be the subgraph induced by positive edges. If $g_s(G,\sigma)\ge |E(G)|/3+2$, then
$(G,\sigma)$ has a family of circuits $\mathcal F$ covering all negative edges of $(G,\sigma)$ and all cutedges of $G^+$ such that 
\[\ell(\mathcal F)<\frac{11} 9 |E(G)|+\frac 5 3 \epsilon(G,\sigma).\]
\end{lemma}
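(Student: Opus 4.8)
The plan is to exploit the structural dichotomy furnished by Lemma~\ref{lem:star}(2): under the hypothesis $g_s(G,\sigma)\ge |E(G)|/3+2$, any signed cycle-tree $(N,\sigma)$ of $(G,\sigma)$ has at most three leaf-cycles and at most one non-leaf cycle, and if it has a non-leaf cycle then all its leaf-cycles are negative. Recall from the discussion preceding Theorem~\ref{thm:even} that $(G,\sigma)$ always contains a signed cycle-tree $(H,\sigma)$ with $E^-(G,\sigma)\subseteq E(H,\sigma)$; since $\epsilon(G,\sigma)\ge 3$ is odd here (the relevant case for the main theorem), $(H,\sigma)$ contains an odd number $\ge 3$ of negative cycles, hence by Lemma~\ref{lem:star}(2) it has either exactly three negative leaf-cycles and no non-leaf cycle, or one non-leaf cycle together with (two or three) negative leaf-cycles. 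In either case $(H,\sigma)$ is a very small, very explicit object — a "theta-like'' cycle-tree on at most four cycles — and I would simply exhibit a good covering family $\mathcal F_1$ for it by hand.

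The first step is thus to cover all of $(H,\sigma)$ cheaply. When $(H,\sigma)$ consists of three negative leaf-cycles $D_1,D_2,D_3$ joined at cutvertices by paths, I would take the three barbells $B_{12}=D_1\cup P_{12}\cup D_2$, $B_{23}=D_2\cup P_{23}\cup D_3$, $B_{13}=D_1\cup P_{13}\cup D_3$ (paths chosen through the tree part of $H$), take the two cheapest of them, and observe this covers every edge of $H$; the connecting-path edges get covered once or twice and the cycle edges twice, so the cost is at most $2|E(H)|$ minus the length of the heaviest connecting path. When there is a non-leaf cycle $D_0$ with negative leaf-cycles hanging off it, the same barbell trick applies with $D_0$ playing the role of a hub. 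In all cases I get $\ell(\mathcal F_1)\le 2|E(H)|$, and crucially $|E(H)|$ is small: each cycle of $H$ has length at least $g_s(G,\sigma)$, but there are at most four of them, and more to the point I can bound $|E(H)|$ in terms of $\epsilon(G,\sigma)$ and a short tree part. Since $\mathcal F_1$ covers all negative edges inside genuine cycles, Lemma~\ref{lem:cutedge} guarantees it also covers every cutedge of $G^+$, which is the second requirement of the lemma.

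The second step is to cover the remaining edges, namely $E(G^+)$ minus the cutedges of $G^+$. Delete the cutedges from $G^+$; each 2-edge-connected component gets a circuit cover of length at most $5/3$ times its size by Theorem~\ref{thm:5/3}, and their union $\mathcal F_2$ has $\ell(\mathcal F_2)\le \tfrac53|E(G^+)| = \tfrac53\big(|E(G)|-\epsilon(G,\sigma)\big)$. Then $\mathcal F=\mathcal F_1\cup\mathcal F_2$ covers everything required, and the arithmetic I need is
\[
\ell(\mathcal F)\le 2|E(H)| + \frac53\big(|E(G)|-\epsilon(G,\sigma)\big).
\]
To land below $\tfrac{11}{9}|E(G)|+\tfrac53\epsilon(G,\sigma)$ I need $2|E(H)| < \tfrac{11}{9}|E(G)| - \tfrac53|E(G)| + \tfrac{10}{3}\epsilon(G,\sigma) = -\tfrac{4}{9}|E(G)| + \tfrac{10}{3}\epsilon(G,\sigma)$; this is where I must be careful, since a naive bound fails. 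The resolution is that $\mathcal F_1$ does much better than $2|E(H)|$ because the two barbells overlap only on cycle edges, so the heavy connecting path is counted once, and because the cycles of $H$ themselves need not all be covered twice — I can instead invoke Theorem~\ref{thm:tree-cover}, which gives a family covering all cycles of the signed cycle-tree $(H,\sigma)$ with length at most $\tfrac43|E(H)|$, but that needs an even number of negative cycles, so here (odd case) I would add one extra short circuit through the "leftover'' negative cycle, or equivalently apply Theorem~\ref{thm:tree-cover} to a subtree with two of the three negative cycles and cover the third leaf-cycle by a single barbell with one of the others. Combining, $\ell(\mathcal F_1)\le \tfrac43|E(H)| + g_s\text{-sized correction}$, and since $|E(H)|\le |E^-(G,\sigma)| + (\text{tree edges}) = \epsilon(G,\sigma) + O(\epsilon)$ while each of the $\le 4$ cycles has length $\ge |E(G)|/3+2$ forces $|E(H)|\le \tfrac43|E(G)|+O(1)$ from the other side...

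The main obstacle, and the delicate point of the whole argument, is precisely reconciling these two bounds on $|E(H)|$: the signed-girth hypothesis makes each individual cycle of $H$ large (at least $|E(G)|/3+2$), but Lemma~\ref{lem:star} caps the number of cycles at four, so $|E(H)|$ cannot exceed roughly $\tfrac43|E(G)|$ — yet I also need the tree (connecting-path) portion of $H$ to be negligible, which follows because in a cubic graph a minimal connected $H\supseteq Q$ has its tree part bounded by the number of branch vertices, hence by $O(\epsilon(G,\sigma))$. Threading the inequality $\ell(\mathcal F)<\tfrac{11}{9}|E(G)|+\tfrac53\epsilon(G,\sigma)$ will require choosing, in the barbell step, to drop the \emph{longest} connecting path and to route the barbells so that the large cycles are each covered as few times as possible; I expect the decisive case to be when $(H,\sigma)$ has a non-leaf cycle $D_0$ together with three negative leaf-cycles, since then $H$ has four large cycles and the budget is tightest, and there I would cover $D_0$ exactly once (it is an interior cycle, shared by two barbells only along $D_0$ would be double-counted, so instead use one barbell through two leaf-cycles plus one positive-free circuit structure) — verifying that this still reaches the bound is the calculation I would grind through carefully rather than here.
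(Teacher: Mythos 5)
There is a genuine gap, and in fact two. First, you have misread what the family $\mathcal F$ in the lemma must cover: it is only required to cover the negative edges and the cutedges of $G^+$, \emph{not} all of $G^+$. The term $\frac53\bigl(|E(G)|-\epsilon(G,\sigma)\bigr)$ for the 2-edge-connected pieces of $G^+$ is added later, in the proof of Theorem~\ref{thm:main}, on top of the lemma's bound. By folding that cover into $\mathcal F$ you arrive at the requirement $2|E(H)|<-\frac49|E(G)|+\frac{10}{3}\epsilon(G,\sigma)$, which is vacuously false; the difficulty you then spend the rest of the proposal trying to patch is an artifact of this misreading. Second, even with the correct reading, your direct construction does not close in the only case that matters. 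The even-negativeness case is already disposed of by inequality (\ref{eq:even}), so the content of the lemma is the odd case, and there Theorem~\ref{thm:tree-cover} is unavailable: a signed cycle-tree containing all $\epsilon(G,\sigma)$ negative edges has an odd number of negative cycles, and such a cycle-tree admits no circuit cover within itself. Your proposed fix --- ``add one extra short circuit through the leftover negative cycle'' --- fails because any circuit meeting that cycle must leave $H$, has length at least $g_s(G,\sigma)\ge|E(G)|/3+2$ by hypothesis, and a priori can be as long as $\frac23|E(G)|+1$; the resulting total $\frac89|E(G)|+\frac43\epsilon+|E(C)|$ is not below $\frac{11}{9}|E(G)|+\frac53\epsilon(G,\sigma)$ in general. (Also, your claim that every cycle of $H$ has length at least $g_s$ is unjustified for negative cycles, which are not circuits.)

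The paper's proof is structurally different: it is a proof by contradiction that never exhibits the family explicitly. Assuming $(G,\sigma)$ is a counterexample, it shows (Claim~1) that for each negative edge $e$ lying in a cycle of some circuit there is a signed cycle-tree $(H_e,\sigma)$ whose cycles contain exactly $E^-(G,\sigma)\setminus\{e\}$; the signed-girth hypothesis forces a minimal such $(H_e,\sigma)$ to be a single circuit (Claim~2, using Lemma~\ref{lem:star} and a counting argument with two disjoint paths to the non-leaf cycle). Then the counterexample assumption is applied to the two covers $\{C\cup P\}\cup\{H_e\}$ and $\{C\cup P'\}\cup\{H_e\}$ to deduce (Claim~3) that \emph{every} positive cycle with negative edges, and every union of two disjoint negative cycles, has length at least $\frac49|E(G)|+6$. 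Finally, three negative edges $e_1,e_2,e_3$ yield cycles $C_{ij}\subseteq D_{e_i}\oplus D_{e_j}$ inside $T\cup\{e_1,e_2,e_3\}$ covering each edge at most twice, so $\sum_{i<j}|E(C_{ij})|\le\frac43|E(G)|+4$, contradicting the lower bound $\frac43|E(G)|+18$ from Claim~3. If you want to salvage a direct construction you would need some substitute for this global lower-bound/upper-bound clash; the local barbell bookkeeping you outline does not supply one.
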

\begin{proof}Let $(G,\sigma)$ be a cubic signed graph with $g_s(G,\sigma)\ge |E(G)|/3+2$.
If $\epsilon(G,\sigma)$ is even, the lemma follows from (\ref{eq:even}). So assume that $\epsilon(G,\sigma)$ is odd. Suppose to the contrary that $(G,\sigma)$ is a counterexample.

Since $\epsilon(G,\sigma)\ge 3$, $(G,\sigma)$ has a circuit cover by Observation~\ref{ob:1-negative-edge}. So $(G,\sigma)$ has a circuit $C$ containing negative edges. The circuit $C$ has a negative edge in a cycle.\medskip

{\bf Claim~1.} {\sl  For any negative edge $e$ contained in a cycle of some circuit, the signed graph $(G,\sigma)$ has a signed cycle-tree $(H,\sigma)$ which has all negative edges in cycles and $E^-(H,\sigma)=E^-(G,\sigma)\backslash \{e\}$.}

\medskip
{\em Proof of Claim~1.} Let $T$ be a spanning tree of $G^+$. For any $e'\in E^-(G,\sigma)\backslash \{e\}$, let $D_{e'}$ be the elementary cycle of $T\cup \{e'\}$. The symmetric difference $\mathscr D_e=\oplus_{e'\in E^-(G,\sigma)\backslash\{e\}} D_{e'}$ consists of disjoint cycles because $G$ is cubic. Let $Q_e$ consist of all cycles of $\mathscr D_e$ containing at least one negative edge.
Let  $H$ be a minimal connected subgraph satisfying $Q_e\subseteq H\subseteq Q_e\cup T$. By the minimality of $H$, we can conclude that $(H,\sigma)$ is a signed cycle-tree of $(G,\sigma)$ such that every edge in $E^-(G,\sigma)\backslash\{e\}$ is contained by a cycle of $(H,\sigma)$. Note that $e\notin E(H,\sigma)$.  So $E^-(H,\sigma)=E^-(G,\sigma)\backslash \{e\}$. This completes the proof of Claim~1.
\medskip

For any negative edge $e$ contained by a cycle of some circuit, among all such signed cycle-trees with property in Claim~1, choose a signed cycle-tree $(H_e,\sigma)$ with the smallest number of cycles. Since $\epsilon(G,\sigma)-1$ is even, it follows that $(H_e,\sigma)$ has an even number of negative cycles.

\medskip

{\bf Claim~2.} {\sl The signed cycle-tree $(H_e,\sigma)$ is a circuit. }\medskip

{\em Proof of Claim~2.} Suppose on the contrary that $(H_e,\sigma)$ is not a circuit. Then it has a non-leaf cycle $D_0$. Let $D_1,...,D_k$ be all leaf-cycles of $(H_e,\sigma)$.  Since $g_s(G,\sigma)\ge |E(G)|/3+2$, by (ii) of Lemma~\ref{lem:star},
$D_0$ is the only non-leaf cycle of $(H_e,\sigma)$, and $D_1,...,D_k$ are negative cycles where $2\le k\le 3$. Further, $(H_e,\sigma)$ has $k+1$ cycles.

Since $G$ is 2-edge-connected and cubic, there are two disjoint paths $P_1$ and $P_2$ from $D_1$ to $D_0$. Since $(G,\sigma)$ does not contain two disjoint circuits, for both $i=1$ and $2$, we have $P_i\cap D_t=\emptyset$ where $t=2$ or $k$. Let $v_1$ and $v_2$ be two endvertices of $P_1$, and $u_1$ and $u_2$ be two endvertices of $P_2$ such that $v_1,v_2\in V(D_1)$ and $u_1,u_2\in V(D_0)$. The two vertices $u_1$ and $u_2$ separate $D_0$ into two internally disjoint segements $S_1$ and $S_2$. Without loss of generality, assume $|E(S_1)|\le |E(S_2)|$.  Then $D_1\cup P_1\cup P_2\cup S_1$ has a positive cycle, denoted by $C_1$. If $C_1$ does not contain a negative edge, then both $S_1$ and $C_1\cap D_1$ do not contain a negative edges. So deleting all internal vertices of $S_1$ and $C_1\cap D_1$ from $H_e\cup (P_1\cup P_2)$ results in a signed cycle tree with $k$ cycles, contradicting that $(H_e,\sigma)$ has the smallest number of cycles. Hence $C_1$ is a positive cycle with negative edges, and  $|E(C_1)|\le (|E(D_1)|-1)+|E(P_1)|+|E(P_2)|+|E(S_1)|$.

Similary, there are two disjoint paths $P_1'$ and $P_2'$ from $D_2$ to $D_0$. Let $S_1'$ be the segment with smaller length of $D_0$ separated by two endvertices of $P_1'$ and $P_2'$.  And $D_2\cup P_1'\cup P_2'\cup S_1'$ contains a positive cycle $C_2$ wih negative edges. Since $(G,\sigma)$ does not contain two disjoint circuit, it follows that for both $i=1$ and 2, $P_i'\cap (P_1\cup P_2\cup D_1)=\emptyset$ and $P_i'\cap D_3=\emptyset$ if $k=3$.
So
\[\begin{aligned}
|E(C_1)|+|E(C_2)| &\le (|E(D_1)|-1)+\sum_{i=1}^2 |E(P_i)|+|E(S_1)|+(|E(D_2)|-1)+\sum_{i=1}^2|E(P_i')|+|E(S_1')|\\
&\le |E(D_1)|+|E(D_2)|+|E(D_0)|+|E(P_1\cup P_2)|+|E(P_1'\cup P_2')|-2\\
&= |V(D_1\cup D_2\cup D_0\cup P_1\cup P_2\cup P_1'\cup P_2')|+4-2\\
&\le |V(G)|+2\\
&\le \frac 2 3 |E(G)|+2.
\end{aligned}\]
Without loss of generality, assume $|E(C_1)|\le |E(C_2)|$. Hence $|E(C_1)|\le |E(G)|/3+1$, contradicting $g_s(G,\sigma)\ge |E(G)|/3+2$. This completes the proof of Claim~2.
\medskip

By Claim~2, in the following, for any negative edge $e$ contained in a cycle of some circuit,t $(H_e,\sigma)$ is a circuit. In other words, $(H_e,\sigma)$ is a positive cycle or a barbell. 
\medskip

{\bf Claim 3.} {\sl Let $C$ be a positive cycle with negative edges or the union of two disjoint negative cycles.
Then} \[|E(C)|\geq\frac{4} 9 |E(G)|+6.\]

{\em Proof of Claim 3.} If $C$ is the union of two negative cycles $D_1$ and $D_2$, then there are two disjoint paths $P$ and $P'$ joining $D_1$ and $D_2$ since $G$ is 2-edge-connected and cubic. For the case that $C$ is a positive cycle, let $P=P'=\emptyset$.

Let $e$ be a negative edge in a cycle of the circuit $C$. Note that $(H_e,\sigma)$ is a circuit.
Then both $\mathcal F_1=\{C\cup P\}\cup \{H_e\}$ and $\mathcal F_2=\{C\cup P'\}\cup \{H_e\}$ are two families of circuits covering all edges in $E^-(G,\sigma)$. Since every negative edge is contained either in a cycle of $C$ or a cycle of $(H_e,\sigma)$, by Lemma~\ref{lem:cutedge}, both $\mathcal F_1$ and $\mathcal F_2$ cover all cutedges of $G^+$.  Since $(G,\sigma)$ is a countexample, both $\mathcal F_1$ and $\mathcal F_2$ have length at least $11|E(G)|/9+5\epsilon(G,\sigma)/3$. So
\[\ell(\mathcal F_1)+\ell(\mathcal F_2)=2|E(C)|+|E(P)|+|E(P')|+2|E(C')|\geq \frac{22} 9 |E(G)|+\frac{10} 3 |\epsilon(G,\sigma)|.\]

 Since $C\cup P\cup P'$ is a connected subgraph of $G$ with at most four vertices of degree 3,
 it follows that $|E(C)|+|E(P)|+|E(P')|\leq |V(G)|+2$.
  Note that the circuit $H_e$ has at most two cycles and hence has at most two vertices of degree 3. Therefore,
 $|E(H_e)|\leq |V(G)|+1$.
It follows that
\[\begin{aligned}
|E(C)|&\geq  \frac{22} 9 |E(G)|+\frac{10} 3 |\epsilon(G,\sigma)|-(|V(G)|+2)-2(|V(G)|+1)\\
&=\frac{22} 9 |E(G)|+\frac{10} 3 |\epsilon(G,\sigma)|-2|E(G)|-4 \\
&\ge  \frac{4} 9 |E(G)|+6.
\end{aligned}\]
This completes the proof of Claim~3. \medskip

Since $(G,\sigma)$ contains at least three negative edges, let $e_i$ ($i=1,2,3$) be negative edges of $(G,\sigma)$ and $D_{e_i}$ be the elementary cycle $T\cup \{e_i\}$. Let $C_{ij} \subseteq D_{e_i}\oplus D_{e_j}$ be either a positive cycle or the union of two disjoint negative cycles, which contains both $e_i$ and $e_j$. 

By Claim~3,  we have
\begin{equation}\label{EQ:2}
|E(C_{12})|+|E(C_{13})|+|E(C_{23})|\geq\frac{4} 3 |E(G)|+18.
\end{equation}
On the other hand,  since $C_{23}\subseteq D_{e_2}\oplus D_{e_3}= (D_{e_1}\oplus D_{e_3})\oplus (D_{e_1}\oplus D_{e_2})=C_{12}\oplus C_{23}$,
it follows that $\{C_{12}, C_{13}, C_{23}\}$ covers each edge of $T\cup\{e_1,e_2,e_3\}$ at
most twice. Therefore,
$$|E(C_{12})|+|E(C_{13})|+|E(C_{23})|\leq 2|E(T\cup\{e_1,e_2,e_3\})|=2(|V(G)|+2)=\frac 4 3 |E(G)|+4,$$
a contradiction to (\ref{EQ:2}). This completes the proof of the lemma.
\end{proof}

Now we are going to prove the main result. Recall our main result here.

\medskip

\noindent{\bf Theorem~\ref{thm:main}.}
{\it Let $(G,\sigma)$ be a 2-connected cubic signed graph. If $(G,\sigma)$ is flow-admissible, then }
\[\scc(G,\sigma)< \frac{26} 9 |E(G)|.\]

\begin{proof} Let $(G,\sigma)$ be a 2-edge-connected flow-admissible cubic signed graph. If $\epsilon(G,\sigma)$ is even, the theorem follows from Theorem~\ref{thm:even}. So in the following, we always assume that $\epsilon(G,\sigma)$ is odd. By Observations~\ref{lem:equ} and \ref{ob:1-negative-edge}, we further assume that $|E^-(G, \sigma)|=\epsilon (G,\sigma)\ge 3$.

Let $G^+=G\backslash E^-(G,\sigma)$. By Theorem~\ref{thm:5/3}, $G^+$ has a family of circuits $\mathcal F_1$ covering all edges of $G^+$ except cutedges with length 
\[\ell(\mathcal F_2)\le \frac 5 3 |E(G^+)|=\frac 5 3 (|E(G)|-|E^-(G,\sigma)|=\frac 5 3 (|E(G)|- \epsilon(G,\sigma)).\]

If the signed-girth of $(G,\sigma)$ satisfies $g_s(G,\sigma)\ge |E(G)|/3+2$, then by Lemma~\ref{lem:bigcircuit}, $(G,\sigma)$ has a family of circuits $\mathcal F_2$ covering edges in $E^-(G,\sigma)$ and all cutedges of $G^+=G\backslash E^-(G,\sigma)$ with length
\[\ell(\mathcal F_2)< \frac{11} 9 |E(G)|+\frac 5 3 \epsilon(G,\sigma).\]
So $\mathcal F=\mathcal F_1\cup \mathcal F_2$ is a circuit cover of $(G,\sigma)$ with length 
\[\ell(\mathcal F)=\ell(\mathcal F_1)+\ell(\mathcal F_2)<\frac 5 3 (|E(G)|- \epsilon(G,\sigma))+ \frac{11} 9 |E(G)|+\frac 5 3 \epsilon(G,\sigma)=\frac{26} 9 |E(G)|.\]
So the theorem holds for all signed graphs with $g_s(G,\sigma)\ge |E(G)|/3+2$.

In the following, assume that $(G,\sigma)$ has a circuit $C$ with length at most $|E(G)|/3+1$. 
Let $e$ be a negative edge contained in a cycle of $C$, and let $(H_e,\sigma)$ be a signed cycle-tree of $(G,\sigma)$ containing all negative edges in $E^-(G,\sigma)\backslash \{e\}$ in cycles of $(H_e,\sigma)$. (Note that, such signed cycle-trees exists as shown in Claim~1 in Lemma~\ref{lem:bigcircuit}). By Theorem~\ref{thm:tree-cover}, $(H_e,\sigma)$ has a family of circuits $\mathcal F_2$ covering all cycles of $(H_e,\sigma)$ with length
\[\ell(\mathcal F_2)\le \frac 4 3 |E(H_e)|\le \frac 4 3 (|V(G)|-1 +|E^-(G,\sigma)\backslash \{e\}|)= \frac 8 9 |E(G)|+\frac 4 3 \epsilon(G,\sigma)- \frac 8 3.\]
So $\mathcal F_2\cup \{C\}$ covers all negative edges of $(G,\sigma)$ and every negative edge is contained by a cycle of some circuit of $\mathcal F_2\cup \{C\}$. Hence $\mathcal F_2\cup \{C\}$ covers all negative edges of $(G,\sigma)$ and all cutedges of $G^+$ by Lemma~\ref{lem:cutedge}.

Note that $G^+$ has a family of circuits $\mathcal F_1$ covering all edges of $G^+$ except cutedges.
So $\mathcal F=\mathcal F_1\cup \mathcal F_2\cup \{C\}$ is a circuit cover of $(G,\sigma)$ with length
\[\begin{aligned} \ell(\mathcal F)&=\ell(\mathcal F_1)+\ell(\mathcal F_2)+|E(C)|\\
&\le \frac 5 3 (|E(G)|-\epsilon(G,\sigma))+\frac 8 9 |E(G)|+\frac 4 3 \epsilon(G,\sigma)- \frac 8 3+\frac 1 3 |E(G)|+1\\
&\le \frac{26} 9|E(G)|-\frac 1 3 \epsilon(G,\sigma)-\frac 5 3\\
&< \frac{26} 9|E(G)|.
\end{aligned}\]
This completes the proof of Theorem~\ref{thm:main}.
\end{proof}

\section{Concluding remarks}

A 2-edge-connected signed graph $(G,\sigma)$ with a circuit cover may not have a circuit double cover. In the following, we construct infinitly many 2-edge-connected signed graphs $(G,\sigma)$ with even negativeness but without circuit double cover properties.

\begin{proposition}
Let $(G,\sigma)$ be a cubic signed graph with a circuit double cover $\mathcal F$. If $v$ is a vertex of degree-3 in a barbell $B\in \mathcal F$, then $v$ is a vertex of degree-3 in another barbell $B'\in \mathcal F$.
\end{proposition}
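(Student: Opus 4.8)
The plan is to argue entirely locally at $v$, using only that $G$ is cubic (so $\deg_G(v)=3$, and every subgraph of $G$ has degree at most $3$ at $v$) together with the defining property of a circuit double cover at the single vertex $v$. First I would record the elementary structural fact that every circuit $C$ of $(G,\sigma)$, viewed as a subgraph, has minimum degree $2$: a positive cycle is $2$-regular, and in a barbell each vertex lies on one of the two negative cycles or in the connecting path, hence has degree at least $2$. Together with $\deg_G(v)=3$ this forces $d_C(v)\in\{0,2,3\}$ for every $C\in\mathcal F$. The key refinement is that $d_C(v)=3$ can occur only when $C$ is a barbell and $v$ is one of its two \emph{joint} vertices (the endpoints of the connecting path): a positive cycle gives $d_C(v)\le 2$, while in a barbell a vertex internal to the connecting path, or lying on a negative cycle but not a joint, would — if it had a third incident barbell-edge — be forced onto a negative cycle or into the path a second time and thus have degree $4$, impossible in a cubic graph. (Incidentally this also shows that in a cubic graph every barbell has a connecting path of positive length and two vertex-disjoint negative cycles, so a barbell has exactly two degree-$3$ vertices.)

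Next I would invoke the double cover condition at $v$. Since $B\in\mathcal F$ is a barbell with $v$ of degree $3$, the subgraph $B$ uses all three edges incident to $v$; as each of those three edges must be covered exactly twice by $\mathcal F$, the circuits in $\mathcal F$ other than (one copy of) $B$ use each of these edges exactly once, so $\sum_{C\in\mathcal F,\,C\ne B} d_C(v)=3$. Because each summand lies in $\{0,2,3\}$, the only way to obtain the total $3$ is $3=3+0+\cdots+0$; hence there is exactly one further circuit $B'\in\mathcal F$ with $d_{B'}(v)=3$. By the refinement above, $d_{B'}(v)=3$ forces $B'$ to be a barbell with $v$ as one of its joint (degree-$3$) vertices, and $B'$ is a member of $\mathcal F$ distinct from $B$ — which is precisely the claim.

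I do not anticipate a real obstacle; the whole argument reduces to the one-line counting identity $\sum_{C\in\mathcal F} d_C(v)=2\deg_G(v)=6$ and the trivial observation that $3$ has a unique expression as a sum of elements of $\{0,2,3\}$. The only points that need a touch of care are the small case analysis showing that, in a cubic graph, a barbell attains degree $3$ only at its two joints (equivalently, its connecting path is nontrivial and its negative cycles are vertex-disjoint), and the bookkeeping convention if $\mathcal F$ is treated as a multiset: should $\mathcal F$ contain two copies of $B$, the ``other'' barbell $B'$ is simply the second copy, still a distinct element of the family, so the statement holds verbatim.
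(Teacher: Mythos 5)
Your proof is correct and takes essentially the same approach as the paper's: both count the coverage of the three edges incident with $v$, note that $B$ accounts for one cover of each, and conclude that the remaining single cover of each must come from exactly one further circuit with degree $3$ at $v$, which is necessarily a barbell. Your version merely makes explicit the $d_C(v)\in\{0,2,3\}$ case analysis and the observation that in a cubic graph a barbell attains degree $3$ only at the endpoints of its connecting path, details the paper leaves implicit.
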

\begin{proof} Since $(G,\sigma)$ is cubic, there are exactly three edges $e_1, e_2, e_3$ incident with $v$.
Since $v$ is a vertex of degree-3 in $B$, $e_1,e_2$ and $e_3$ are covered once by $B$. So $e_1, e_2$ and $e_3$ are covered once by $\mathcal F\backslash \{B\}$. Hence, $e_1, e_2$ and $e_3$ belong to exactly one circuit in $\mathcal F$, which must be a barbell $B'$. So $v$ is a vertex of degree-3 in $B'$.
\end{proof}

\begin{figure}[!hbtp] \refstepcounter{figure}\label{fig:2noCDC}
\begin{center}
\includegraphics[scale=1]{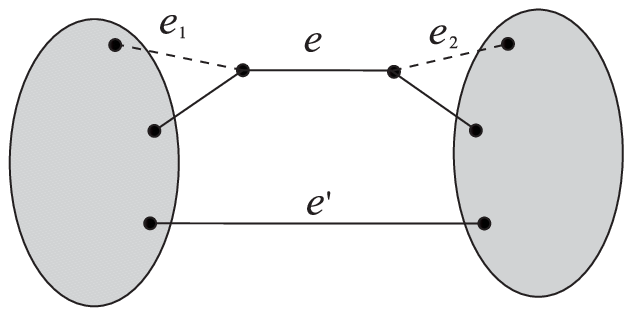}\\
{Figure~\ref{fig:2noCDC}: Infinitly many 2-connected signed graphs without a circuit double cover \\
(solid edges are positive and dashed edges are negative).}
\end{center}
\end{figure}

Let $G$ be a 2-connected cubic graph and $S=\{e,e'\}$ be a two edge-cut of $G$. Assume $e=uv$ and let $e_1$ incident with $u$ and $e_2$ incdient with $v$.  The signed graph $(G,\sigma)$ is obatined from $G$ by assigning -1 to both $e_1$ and $e_2$, and assigning 1 to all other edges. Suppose on the contrary that $(G,\sigma)$ have a circuit cover $\mathcal F$. If $\mathcal F$ has a barbell $B$, then $B\cap S\ne \emptyset$ since $e_1$ and $e_2$ belong two different cycles of $B$. We may assume that $e\in B$ (a similar argument works for $e'\in B$). 
Then $e$ is the path of $B$ joining the two cycles of $B$.
Hence both $u$ and $v$ are vertices of degree 3 in $B$. Then $v$ is a vertex of degree 3 in another barbell $B'$ in $\mathcal F$ by the above proposition. It follows that $e'$ can not be covered by any circuit of $(G,\sigma)$. So $\mathcal F$ does not have any barbell. Hence $e_1$ and $e_2$ are contained by two positive cycles $C_1$ and $C_2$ of $\mathcal F$. Then both $C_1$ and $C_2$ contain $S$. It follows that the third edge incident with $u$ or
$v$ different from $e_1, e_2$ and $e$ can not be covered by circuits in $\mathcal F$.  Hence $(G,\sigma)$ is a counterexample. This construction works for all cubic graphs with 2-edge-cut. Hence there are infinitly many 2-connected cubic signed graphs with a circuit cover but having no circuit double covers.

The example in Figure~\ref{fig:noCDC} shows that a 3-connected cubic signed graph with even negativeness may not have a circuit double cover.
By above proposition, any circuit double cover of the signed graph does not have a barbell. Because a circuit containing the two negative edges of the signed graph in Figure~\ref{fig:noCDC}  has length either 5 or 6, a counting of lengths of circuits shows that the signed graph has no circuit double covers.

As many 2-edge-connected signed graphs have no circuit double covers,  it is interesting to ask, is there an integer $k$ such that every 2-connected flow-admissible signed graph $(G,\sigma)$ has a circuit $k$-cover?


\end{document}